\newcommand{\esp}[1]{\mathbb{E}\left[#1\right]}
\newcommand{\espk}[2]{\mathbb{E}_{#1}\left[#2\right]}
\newcommand{\R}{\mathbb{R}}
\newcommand{\xit}{{\xi_t}}
\newcommand{\dom}{{\rm dom}\ }
\newcommand{\interior}{{ \rm int}\ }
\newcommand{\reals}{\R}
\newtheorem*{rep@theorem}{\rep@title}
\newcommand{\newreptheorem}[2]{%
\newenvironment{rep#1}[1]{%
 \def\rep@title{#2 \ref{##1}}%
 \begin{rep@theorem}}%
 {\end{rep@theorem}}}
\newtheorem{definition}{Definition}[]
\newtheorem{assumption}{Assumption}[]
\newtheorem{theorem}{Theorem}[]
\newtheorem{proposition}{Proposition}[]
\newtheorem{lemma}{Lemma}[]
\newtheorem{remark}{Remark}[]
\newtheorem{corollary}{Corollary}[]
\author[*,1,3]{Radu-Alexandru Dragomir}
\author[*,2,3]{Mathieu Even}
\author[*,2,3]{Hadrien Hendrikx}
\affil[*]{Alphabetical order, equal contribution}
\affil[1]{Université Toulouse 1 Capitole}
\affil[2]{INRIA Paris}
\affil[3]{D.I. Ecole Normale Supérieure, CRNS, PSL University, Paris}
\date{}
\begin{document}
\title{\textbf{Fast Stochastic Bregman Gradient Methods:\\ Sharp Analysis and Variance Reduction}}

\maketitle
\begin{abstract}
 We study the problem of minimizing a relatively-smooth convex function using stochastic Bregman gradient methods. We first prove the convergence of Bregman Stochastic  Gradient Descent (BSGD) to a region that depends on the noise (magnitude of the gradients) at the optimum. In particular, BSGD with a constant step-size converges to the exact minimizer when this noise is zero (\emph{interpolation} setting, in which the data is fit perfectly). Otherwise, when the objective has a finite sum structure, we show that variance reduction can be used to counter the effect of noise. In particular, fast convergence to the exact minimizer can be obtained under additional regularity assumptions on the Bregman reference function. We illustrate the effectiveness of our approach on two key applications of relative smoothness: tomographic reconstruction with Poisson noise and statistical preconditioning for distributed optimization. 
\end{abstract}

\textbf{Keywords:}
  relative smoothness, Bregman gradient, mirror descent, stochastic methods, variance reduction, Poisson inverse problems, statistical preconditioning.%

\section{Introduction}
We are interested in solving the minimization problem
\begin{equation} \label{eq:main_intro}
    \min_{x \in C} f(x),\text{ where } f(x) = \espk{\xi}{f_\xi(x)},
\end{equation}
where $C$ is a closed convex subset of $\R^d$ and $f_\xi$ are differentiable convex functions. These problems typically arise in machine learning when performing (empirical) risk minimization, in which case $f_\xi$ is for instance a loss function for some random sample $\xi$. Problem~\eqref{eq:main_intro} is also encountered in signal processing applications such as image deblurring or tomographic reconstruction inverse problems, in which the goal is to recover an unknown signal from a large number of noisy observations. First-order methods are often very efficient for solving problems such as~\eqref{eq:main_intro}, but computing a gradient $\nabla f$ might be very expensive for large-scale problems (large number of components $f_\xi$), and even impossible in the case of true risk minimization (infinite number of $f_\xi$). In this case, stochastic gradient methods have proven to be particularly effective thanks to their low cost per iteration. The simplest one, Stochastic Gradient Descent (SGD), consists in updating $x_t$ as
\[ x_{t+1} = \arg \min_{x \in C}\left\{ \eta_t g_t^\top x +\frac{1}{2} \|x-x_t\|^2\right\} \]
where $g_t$ is a gradient estimate such that $\esp{g_t} = \nabla f(x_t)$. In our case, a natural choice would be $g_t = \nabla f_{\xi_t}(x_t)$ for some $\xi_t$. The choice of the step size $\eta_t$ is crucial for obtaining good performances and is typically related to the smoothness of $f$ with respect to the Euclidean norm.

Beyond simply adapting the step size, a powerful generalization of SGD consists in refining the geometry and performing instead Bregman gradient (a.k.a mirror) steps as
\begin{equation} \label{eq:iter_sgd}
    x_{t+1} = \arg \min_{x \in C}\left\{\eta_t \ g_t^\top x + D_h(x, x_t)\right\},
\end{equation}
where the Euclidean distance has been replaced by the Bregman divergence with respect to a reference function $h$, which writes:
\begin{equation} \label{eq:bregman_divergence}
        D_h(x, y) = h(x) - h(y) - \nabla h(y)^\top(x - y),
\end{equation}
for all $x \in \dom h, y \in \interior \dom h$. We make the following blanket assumptions on $h$ throughout the article, which guarantee well-posedness of the update~\eqref{eq:iter_sgd}.

\begin{assumption}\label{assumption:blanket}
The function $h:\R^n \rightarrow \R \cup \{\infty\}$ is twice continuously differentiable and strictly convex on $\interior C$. Moreover, for every $y \in \R^d$, the problem
\[ \min_{x \in C} h(x) - x^\top y\]
has a unique solution, which lies in $\interior C$.
\end{assumption}

The standard SGD algorithm corresponds to the case where $h = \frac{1}{2}\| \cdot \|^2$. However, a different choice of $h$ might better fit the geometry of the set $C$ and the curvature of the function, allowing the algorithm to take larger steps in directions where the objective gradient changes slowly. This choice is guided by the notion of relative smoothness and strong convexity, introduced in \citet{Bauschke2017,lu2018relatively}. Instead of the squared Euclidean norm for standard smoothness, \textit{relative} regularity is measured with respect to the reference function $h$.
\begin{definition}
    The function $f$ is said to be $L$-relatively smooth and $\mu$-relatively strongly convex with respect to $h$ if it is differentiable and for all $x,y \in \interior \dom h$, 
    \begin{equation} \label{eq:rel_smooth_sc}
        \mu D_h(x, y) \leq D_f(x,y) \leq L D_h(x, y).
    \end{equation}
\end{definition}
where $D_f$ is defined similarly to~\eqref{eq:bregman_divergence}.
Note that if $\mu = 0$, the left-hand side inequality reduces to assuming convexity of $f$. Similarly, if $h = \frac{1}{2}\| \cdot \|^2$, then $D_h(x, y) = \frac{1}{2}\| x - y\|^2$, and the usual notions of smoothness and strong convexity are recovered. If both functions are two times differentiable, Equation~\eqref{eq:rel_smooth_sc} can be turned into an equivalent condition on the Hessians: $\mu \nabla^2 h(x, y) \preceq \nabla^2 f(x) \preceq L \nabla^2 h(x)$. Throughout the article, we will generally write $\mu_{f/h}$ and $L_{f/h}$ to insist on the relative aspect.

Writing the optimality conditions for the minimization problem of Equation~\eqref{eq:iter_sgd}, we obtain the following equivalent iteration, which is in the alternative Mirror Descent form~\citep{nemirovsky1983problem}:
\begin{equation}
\label{eq:iter_smd_explicit}
    \nabla h(x_{t+1}) = \nabla h(x_t) - \eta_t g_t.
\end{equation} 
Although these updates have a closed-form solution for many choices of the reference function $h$, they may be harder to perform than standard gradient steps, since they require solving the subproblem defined in~\eqref{eq:iter_sgd}. Yet, this may be worth doing in some cases to reduce the overall iteration complexity, if the resulting majorization in \eqref{eq:rel_smooth_sc} is much tighter than with the Euclidean distance. Let us list some applications of relative regularity:\\

\textbf{Problems with unbounded curvature.} Some problems have singularities at some boundary points in $C$ where the Hessian grows arbitrarily large. In this situation, smoothness with respect to the Euclidean norm does not hold globally, and standard gradient methods become inefficient as they necessit excessively small step sizes or costly line search procedures. A typical example arises in inverse problems with Poisson noise, which are used in particular for image deblurring~\citep{Review2009} or tomographic reconstruction~\citep{kak2002principles}. In this case, the objective function involves the Kullback-Leibler divergence, which becomes singular as one of its arguments approaches 0. However, by choosing the reference function ${h(x) = -\sum_{i=1}^d \log(x^{(i)})}$, one can show that relative smoothness holds globally~\cite{Bauschke2017}. For more examples, see \citet{lu2018relatively,Bolte2018noncvx,Nesterov2019,mishchenko2019sinkhorn}.\\

\textbf{Distributed optimization.} When $h$ approximates $f$ in the sense of~\eqref{eq:rel_smooth_sc}, Bregman methods can be used to speed up convergence by performing non-uniform preconditioning~\citep{shamir2014communication, reddi2016aide, yuan2020convergence, hendrikx2020statistically}. Typically, $h$ is chosen as the objective function on a smaller portion of the dataset of size $n_{\rm prec}$ (\emph{e.g.}, the dataset of the server), which improves the conditioning by a factor of up to $n_{\rm prec}$ compared to Euclidean methods, while naturally taking advantage of an eventually small effective dimension of the dataset \citep{even2021concentration}. In this case, forming the gradient $g_t$ requires communication with the workers (where most of the data is held), and is thus expensive. Although the updates may not have a simple expression, the inner problem of Equation~\eqref{eq:iter_sgd} can be solved locally at the server without additional communications. Therefore, Bregman methods allow to drastically reduce the communication cost by reducing the overall iteration complexity.\\

Despite these applications, there are still many gaps in our understanding of convergence guarantees of Bregman gradient methods. In particular, most existing results focus on the deterministic case $g_t = \nabla f(x_t)$, or do not leverage the relative regularity assumptions. 

\subsection*{Contributions and outline}
In this work, we develop convergence theorems for Bregman SGD, for which the variance depends on the magnitude of the stochastic gradients at the optimum, and which can thus be much smaller than the one used in~\citet{Hanzely2018}, in particular for overparametrized models (which verify the interpolation condition that all stochastic gradients are equal to $0$ at the optimum). Our analysis relies on the Bregman generalization of a few technical lemmas such as the celebrated $\|a + b\|^2 \leq 2(\|a\|^2 + \|b\|^2)$ inequality (Lemma~\ref{lemma:bregman_young}) or the co-coercivity inequality (Lemma~\ref{lemma:bregman_cocoercivity_main}), which we believe to be of independent interest. 

Then, we show that variance-reduction techniques, which are widely used to accelerate traditional Euclidean stochastic methods when the objective has a finite-sum structure~\citep{SAG, johnson2013accelerating, Defazio2014, allen2017katyusha}, can be adapted to the Bregman setting. Although this generally requires stronger regularity assumptions (such as global smoothness of $h$ and Lipschitz continuity of $\nabla^2 h^*$), we show that the asymptotical rate of convergence solely depends on relative regularity constants. The same type of results (asymptotic speedup under additional smoothness assumptions) is observed when applying Nesterov-type acceleration to Bregman gradient methods ~\citep{Hanzely2018, dragomir2019optimal, hendrikx2020statistically}. We provide a summary of the rates proven in this paper in the appendix.

We start by discussing the related work in Section~\ref{sec:related_work}. Then, Section~\ref{sec:sgd} presents the results for stochastic gradient descent, along with the main technical lemmas. Section~\ref{sec:variance_reduction} develops a Bregman version of the standard SAGA algorithm~\citep{Defazio2014}. Finally, Section~\ref{sec:experiments} illustrates the efficiency of the proposed methods on several applications, including Poisson inverse problems, tomographic reconstruction and distributed optimization. 

\section{Related work}
\label{sec:related_work}
The Bregman gradient method was first introduced as the \textit{Mirror Descent} scheme\footnote{Note that \emph{Mirror Descent} and \emph{Bregman Gradient} refer to the same algorithm, but that \emph{Mirror Descent} is typically used when $f$ is non-smooth, or in the online optimization community, whereas \emph{Bregman Gradient} is generally preferred when using the relative smoothness assumption. Yet, both names are valid and there are exceptions, for instance~\citet{hanzely2018fastest} use the \emph{Mirror Descent} terminology although they assume relative smoothness.}~\citep{nemirovsky1983problem,beck2003mirror} for minimizing convex nonsmooth functions, and enjoyed notable success in online learning \cite{Bubeck2011}.
More recently, the introduction of relative smoothness~\citep{Bauschke2017, lu2018relatively, Bolte2018noncvx} has also brought interest in applying Bregman methods to differentiable objectives. This condition guides the choice of a well-suited reference function $h$ which can greatly improve efficiency over standard gradient descent. While the vanilla Bregman descent method yields the same convergence rate as the Euclidean counterpart, subsequent work has focused on obtaining better rates with acceleration schemes~\citep{Hanzely2018}. However, lower bounds show that the rates for relatively smooth optimization cannot be accelerated in general~\citep{dragomir2019optimal}, and that additional regularity assumptions are needed. Similar notions of relative regularity have also been investigated for non-differentiable functions, such as relative continuity~\citep{lu2019relative, antonakopoulos2019adaptive}. \citet{zhou2020regret} also study non-differentiable functions, but in the online setting and without relative continuity. 

Stochastic optimization methods, and in particular SGD, are very efficient when the number of samples is high~\citep{bottou2012stochastic} and are often referred to as ``the workhorse of machine learning''. The problem with SGD is that, in general, it only converges to a neighbourhood of the optimum unless a diminishing step-size is used. Variance reduction can be used to counter this problem, and many variance-reduced methods have been developed, such as SAG~\citep{SAG}, SDCA~\citep{shalev2013stochastic, shalev2016sdca}, SVRG~\citep{johnson2013accelerating} or SAGA~\citep{Defazio2014}. 

Surprisingly, stochastic Bregman gradients algorithms have received less attention. \citet{hanzely2018fastest, gao2020randomized,hendrikx2020dual} study Bregman coordinate descent methods, and \citet{zhang2018convergence} study the non-convex non-smooth setting. \citet{antonakopoulos2020online} study stochastic algorithms for online optimization, under Riemann-Lipschitz continuity. In contrast, our work focuses on Bregman SGD for relatively-smooth objectives. \citet{hanzely2018fastest} study the same setting and obtain comparable convergence rates, but with a much looser notion of variance, which we discuss more in details in the next section. This is problematic since their bound on the variance is thus proportional to the magnitude of the gradients along the trajectory, and may thus be very large when far from the optimum if $f$ is strongly convex. In contrast, our definition of variance leverages the stochastic gradients at the optimum, which allows us to obtain significant results without bounded gradients and in the interpolation regime (zero gradients at the optimum). In particular, our analysis can be seen as a Bregman generalization of the analysis from~\citet{gower2019sgd}. \citet{Davis2018StochasticMM} also analyze a similar setting, but again with more restrictive assumptions on the noise and boundedness of the gradients. Besides, to the best of our knowledge, variance reduction for Bregman stochastic methods was only studied in~\citet{shi2017bregman} in the context of stochastic saddle-point optimization, but without leveraging relative regularity assumptions like we do in this work.

\section{Bregman Stochastic Gradient Descent}
\label{sec:sgd}
\subsection{Preliminaries}
We start by introducing a few technical lemmas, which are Bregman analogs to well-known Euclidean results, and which are at the heart of our analysis. All missing proofs can be found in Appendix~\ref{app:sgd}.

Recall that the conjugate $h^*$ is defined for $y \in \R^d$ as $h^*(y) = \sup_{x \in \reals^d}  x^\top y - h(x)$. In particular, under Assumption \ref{assumption:blanket}, $h^*$ is convex and differentiable on $\reals^d$ \citep[Cor. 18.12]{Bauschke2011book}, and $\nabla h^*(\nabla h(y)) = y$ for $y \in \interior C$, which implies the following result:
\begin{lemma}[Duality] \label{lemma:duality}
    For $x,y \in \interior \dom h$, we have $D_h(x, y) = D_{h^*}(\nabla h(y), \nabla h(x))$.
\end{lemma}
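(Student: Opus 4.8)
The plan is to prove the duality identity $D_h(x,y) = D_{h^*}(\nabla h(y), \nabla h(x))$ by unfolding both sides using the definition of the Bregman divergence and the relationship between $h$ and its conjugate $h^*$. The key facts I would invoke are the ones already recalled just before the statement: under Assumption~\ref{assumption:blanket}, $h^*$ is differentiable on $\R^d$, the gradient map $\nabla h$ and $\nabla h^*$ are inverse to one another on the relevant sets (so $\nabla h^*(\nabla h(y)) = y$ and $\nabla h^*(\nabla h(x)) = x$ for $x,y \in \interior \dom h$), and the Fenchel--Young equality, which states that $h(z) + h^*(\nabla h(z)) = z^\top \nabla h(z)$ whenever $z$ and $\nabla h(z)$ form a conjugate pair.

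First I would write out the right-hand side explicitly. Setting $u = \nabla h(y)$ and $v = \nabla h(x)$, the definition gives
\begin{equation*}
D_{h^*}(u, v) = h^*(u) - h^*(v) - \nabla h^*(v)^\top (u - v).
\end{equation*}
Here the crucial substitution is $\nabla h^*(v) = \nabla h^*(\nabla h(x)) = x$, which turns the linear term into $x^\top(\nabla h(y) - \nabla h(x))$. Next I would eliminate the conjugate values $h^*(u)$ and $h^*(v)$ using the Fenchel--Young equalities $h^*(\nabla h(y)) = y^\top \nabla h(y) - h(y)$ and $h^*(\nabla h(x)) = x^\top \nabla h(x) - h(x)$. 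Substituting these three relations into the expression for $D_{h^*}(u,v)$ and expanding, the $h^*$ terms get replaced entirely by primal quantities.

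After the substitution I would simply collect terms. The expression becomes
\begin{equation*}
\big(y^\top \nabla h(y) - h(y)\big) - \big(x^\top \nabla h(x) - h(x)\big) - x^\top\big(\nabla h(y) - \nabla h(x)\big),
\end{equation*}
and the $x^\top \nabla h(x)$ contributions cancel, leaving $h(x) - h(y) + (y - x)^\top \nabla h(y)$, which is exactly $D_h(x,y)$ by definition~\eqref{eq:bregman_divergence}. This completes the argument.

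I do not expect a genuine obstacle here, since the result is a clean algebraic identity once the conjugacy machinery is in place; the only point requiring care is justifying that the gradient inversion $\nabla h^*(\nabla h(x)) = x$ and the Fenchel--Young equality are valid at the points $x, y \in \interior \dom h$ under consideration. This is precisely guaranteed by Assumption~\ref{assumption:blanket} together with the cited differentiability of $h^*$ \citep[Cor.~18.12]{Bauschke2011book}, so the identity holds wherever both sides are defined. The slight subtlety worth a remark is that one must ensure $\nabla h(x), \nabla h(y)$ lie in the interior of $\dom h^*$ so that $D_{h^*}$ is well-defined, which again follows from the blanket assumption guaranteeing that minimizers lie in $\interior C$.
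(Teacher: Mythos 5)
Your proof is correct. The paper itself gives no argument for this lemma---it simply points to \citet[Thm 3.7.]{Bauschke1997}---so your computation fills in the omitted proof, and it is the standard one: apply Fenchel--Young equality at the two conjugate pairs $(x,\nabla h(x))$ and $(y,\nabla h(y))$, use the gradient inversion $\nabla h^*(\nabla h(x)) = x$, and collect terms. All three ingredients are legitimately available here: differentiability of $h$ at $x,y \in \interior \dom h$ means the supremum defining $h^*(\nabla h(z))$ is attained at $z$ (giving Fenchel--Young with equality), and Assumption~\ref{assumption:blanket} guarantees that $h^*$ is finite and differentiable on all of $\R^d$, so your closing worry about $\nabla h(x), \nabla h(y)$ lying in the interior of $\dom h^*$ is automatically resolved: $\dom h^* = \R^d$ and $D_{h^*}$ is well-defined everywhere, with the unique minimizer in the blanket assumption being precisely $\nabla h^*(y)$. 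Your algebra checks out: after substitution the $x^\top \nabla h(x)$ terms cancel and the remainder is $h(x) - h(y) - \nabla h(y)^\top(x-y) = D_h(x,y)$.
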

See, \emph{e.g.},~\citet[Thm 3.7.]{Bauschke1997} for the proof. Using duality, we prove the following key lemma:
\begin{lemma}\label{lemma:bregman_young}
Let $x^+$ be such that $\nabla h(x^+) = \nabla h(x) - g$, and similarly define $x^+_1$ and $x^+_2$ from $g_1$ and $g_2$. Then, if $g = \frac{g_1 + g_2}{2}$, we obtain:
$$D_h(x, x^+) \leq \frac{1}{2}\left[ D_h(x, x^+_1) + D_h(x, x^+_2)\right].$$
\end{lemma}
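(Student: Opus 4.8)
The plan is to translate everything into the dual space via the duality lemma (Lemma~\ref{lemma:duality}) and recognize the claimed inequality as an instance of convexity of the Bregman divergence $D_{h^*}$ in its \emph{second} argument. Concretely, applying Lemma~\ref{lemma:duality} to each of the three terms gives
\begin{equation*}
D_h(x, x^+) = D_{h^*}(\nabla h(x^+), \nabla h(x)), \qquad D_h(x, x^+_i) = D_{h^*}(\nabla h(x^+_i), \nabla h(x)),
\end{equation*}
for $i=1,2$. Here I would stop and double-check the exact orientation of the arguments in Lemma~\ref{lemma:duality}: it states $D_h(x,y) = D_{h^*}(\nabla h(y), \nabla h(x))$, so with $y = x^+$ the first slot of $D_{h^*}$ receives $\nabla h(x^+)$ and the second slot receives the common point $\nabla h(x)$. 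This is the crucial structural observation: all three dual divergences share the \emph{same} second argument $\nabla h(x)$, so the inequality only involves varying the first argument.

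Next I would exploit the defining relations $\nabla h(x^+) = \nabla h(x) - g$ and $\nabla h(x^+_i) = \nabla h(x) - g_i$. Writing $p = \nabla h(x)$, the three dual points are $p - g$, $p - g_1$, and $p - g_2$, and the hypothesis $g = \tfrac{1}{2}(g_1 + g_2)$ becomes exactly
\begin{equation*}
p - g = \frac{1}{2}\big[(p - g_1) + (p - g_2)\big],
\end{equation*}
i.e. the first argument of the left-hand divergence is the midpoint of the first arguments of the two right-hand divergences. So the claim reduces to showing
\begin{equation*}
D_{h^*}\!\left(\tfrac{1}{2}(u_1 + u_2),\, p\right) \leq \tfrac{1}{2}\big[D_{h^*}(u_1, p) + D_{h^*}(u_2, p)\big]
\end{equation*}
with $u_i = p - g_i$, which is nothing but midpoint convexity of $u \mapsto D_{h^*}(u, p)$.

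The final step is to justify that convexity. Since $D_{h^*}(u, p) = h^*(u) - h^*(p) - \nabla h^*(p)^\top(u - p)$, as a function of $u$ it equals $h^*(u)$ plus an affine term in $u$; it is therefore convex precisely because $h^*$ is convex. Convexity of $h^*$ follows from $h^*$ being a supremum of affine functions (standard), and is also noted in the excerpt right before Lemma~\ref{lemma:duality}. Applying the definition of convexity at the two points $u_1, u_2$ with weights $\tfrac12, \tfrac12$ gives the midpoint inequality, and translating back through the duality lemma yields the statement.

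The only genuine subtlety, which I expect to be the main thing to watch rather than a real obstacle, is bookkeeping of the argument order in the duality lemma and verifying that all three points $x, x^+, x^+_1, x^+_2$ lie in $\interior \dom h$ so that Lemma~\ref{lemma:duality} applies; this is guaranteed by Assumption~\ref{assumption:blanket}, which ensures that points defined via $\nabla h(\cdot) = \text{(some dual vector)}$ lie in $\interior C \subseteq \interior \dom h$. Once those domain conditions are in place, the proof is a one-line reduction to convexity of $h^*$, with no inequality manipulation needed beyond the definition of a convex function.
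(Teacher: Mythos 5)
Your proof is correct and is essentially identical to the paper's: both pass to the dual via Lemma~\ref{lemma:duality}, observe that $\nabla h(x) - g$ is the midpoint of $\nabla h(x) - g_1$ and $\nabla h(x) - g_2$, invoke convexity of $D_{h^*}$ in its first argument, and translate back. Your extra remarks on the domain conditions (Assumption~\ref{assumption:blanket}) and the argument-order bookkeeping are sound but not points where the paper's proof differs.
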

Lemma~\ref{lemma:bregman_young} can be adapted for any $g = (1 - \alpha)g_1 + \alpha g_2$ with $\alpha \in [0, 1]$. In the Euclidean case $h=\|\cdot\|^2$, we recover $\|\frac{g_1 + g_2}{2}\|^2 \leq \frac{1}{2}\left(\|g_1\|^2 + \|g_2\|^2\right)$.  We now generalize the cocoercivity of the gradients~\citep[Eq. 2.1.7]{Nesterov2004} to the relatively smooth case:
\begin{lemma}[Bregman Cocoercivity]
\label{lemma:bregman_cocoercivity_main}
  If a convex function $f$ is relatively $L$-smooth w.r.t to $h$, then for any $\eta \leq \frac{1}{L}$, 
  \[ D_f(x,y) \geq \frac{1}{\eta} D_{h^*} \! \left(\nabla h(x) - \eta \left( \nabla f(x) - \! \nabla f(y) \right) ,\nabla h(x) \right), \]
  for all $x,y \in \interior \dom h$.
\end{lemma}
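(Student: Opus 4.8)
The plan is to mimic the classical Euclidean proof of cocoercivity, replacing the Euclidean descent lemma and its explicit minimization step by their relative-smoothness and Bregman-proximal analogues. First I would fix $x,y \in \interior \dom h$ and introduce the shifted function $\phi(z) = f(z) - \nabla f(y)^\top z$. Adding an affine term leaves the Bregman divergence unchanged, so $D_\phi = D_f$, which means $\phi$ is convex and relatively $L$-smooth with respect to $h$, with $\nabla \phi(z) = \nabla f(z) - \nabla f(y)$. The purpose of this shift is that $\nabla \phi(y) = 0$, so by convexity $y$ is a global minimizer of $\phi$, i.e. $\phi(y) = \min_z \phi(z)$, while the gap $\phi(x) - \phi(y)$ equals exactly $D_f(x,y)$.

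Next I would invoke the relative-smoothness upper bound (the Bregman descent lemma) $\phi(z) \le \phi(x) + \nabla \phi(x)^\top (z-x) + L D_h(z,x)$, valid for all $z \in \interior \dom h$. Chaining this with $\phi(y) \le \phi(z)$ and minimizing the right-hand side over $z$ gives
\[ D_f(x,y) = \phi(x) - \phi(y) \ge - \min_z \left\{ g^\top (z-x) + L D_h(z,x) \right\}, \]
where $g = \nabla f(x) - \nabla f(y)$.

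The crux of the argument is evaluating this inner minimization in closed form and recognizing it as a dual Bregman divergence. The first-order condition yields the minimizer $z^\star$ with $\nabla h(z^\star) = \nabla h(x) - \frac{1}{L} g =: p$, whose existence and uniqueness in $\interior C$ are guaranteed by Assumption~\ref{assumption:blanket}. Writing $u = \nabla h(x)$ and using the Fenchel relations $\nabla h^*(u) = x$ and $h^*(u) = u^\top x - h(x)$, I would simplify $g^\top (z^\star - x) + L D_h(z^\star, x)$ and show that it equals $-L D_{h^*}(p, u)$. This is the step I expect to require the most care, since it hinges on the asymmetry of the Bregman divergence: a convenient route is to rewrite $D_h(z^\star, x) = D_{h^*}(u, p)$ via Lemma~\ref{lemma:duality}, observe that $g = L(u - p)$, and then use the identity $D_{h^*}(u,p) + D_{h^*}(p,u) = (x - z^\star)^\top (u - p)$ to convert the resulting linear term into the correctly-ordered divergence $D_{h^*}(p,u)$. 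Substituting back produces exactly the claimed inequality in the case $\eta = 1/L$.

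Finally, to obtain the statement for every $\eta \le 1/L$ rather than only for $\eta = 1/L$, I would note that relative $L$-smoothness implies relative $(1/\eta)$-smoothness whenever $1/\eta \ge L$, since the upper bound $D_f \le L D_h \le \frac{1}{\eta} D_h$ only weakens. Applying the $\eta = 1/L$ result with $1/\eta$ in place of $L$ then yields $D_f(x,y) \ge \frac{1}{\eta} D_{h^*}\!\left( \nabla h(x) - \eta(\nabla f(x) - \nabla f(y)), \nabla h(x) \right)$, which is the desired conclusion.
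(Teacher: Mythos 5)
Your proof is correct and follows essentially the same route as the paper's: both shift $f$ by the affine term $\nabla f(y)^\top z$ so that $y$ becomes a global minimizer with $D_f(x,y)$ as the optimality gap, apply the relative-smoothness majorization, minimize it in closed form via the mirror step $\nabla h(z^\star) = \nabla h(x) - \eta(\nabla f(x) - \nabla f(y))$, and convert to $D_{h^*}$ using Lemma~\ref{lemma:duality}. The only differences are bookkeeping: the paper absorbs the step size $\eta \le 1/L$ into the smoothness constant from the outset rather than proving the case $\eta = 1/L$ and rescaling, and it simplifies $Q_y(u^+,x)$ directly to $g_y(x) - \frac{1}{\eta}D_h(x,u^+)$ where you instead pass through $D_h(z^\star,x) = D_{h^*}(u,p)$ and the symmetrization identity $D_{h^*}(u,p) + D_{h^*}(p,u) = (x-z^\star)^\top(u-p)$ to obtain the correctly ordered divergence.
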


\subsection{Variance definition}
We start by specifying two assumptions on the structure of the noise. Note that we use a constant step-size ${\eta > 0}$ throughout this section for simplicity, but similar results hold with decreasing step-sizes. We denote $x^\star = \arg \min_x f(x)$ the minimizer of $f$ and $\|x\|_H^2 = x^\top H x$ for a positive definite operator $H$ and $x\in \R^d$.

\begin{assumption} \label{assumption:noise}
The stochastic gradients $g_t$ are such that $g_t = \nabla f_\xit (x_t)$, with $\espk{\xit}{f_\xit} = f$ and $f_\xit$ is convex and $L_{f/h}$-relatively smooth with respect to $h$ for all $\xi_t$. Besides, there exists a constant $\sigma^2 \geq 0$ such that:
\begin{align*}
    \sigma^2 &\geq \frac{1}{2\eta^2} \espk{\xit}{ D_{h^*}(\nabla h(x_t) - 2\eta \nabla f_\xit(x^\star), \nabla h(x_t))}\\
    &= \espk{\xi_t}{\|\nabla f_{\xi_t}(x^\star)\|^2_{\nabla^2 h^*(z_t)}},
\end{align*}
for some $z_t \in [\nabla h(x_t) - 2\eta \nabla f_{\xi_t}(x^\star), \nabla h(x_t)]$.
\end{assumption}

The assumption that the stochastic gradients are actual gradients of stochastic functions which are themselves smooth with respect to $h$ is rather natural, as already discussed in the introduction. It is at the heart of variance reduction in the finite sum setting (though the sum does not need to be finite in the case of Assumption~\ref{assumption:noise}), and is in particular verified when solving (Empirical) Risk minimization problems.

Yet, it prevents the analysis from applying to coordinate descent methods for instance, in which $g_t = \nabla_i f(x_t)$, with $i \in \{1, \cdots, d\}$. However, in this case, the extra structure can also be leveraged to obtain similar results~\citep{hanzely2018fastest, hendrikx2020dual, gao2020randomized}. 

For the variance, Assumption~\ref{assumption:noise} is a Bregman adaptation of the usual variance at the optimum definition used for instance in~\citet{moulines2011non, gower2019sgd}. Note that if $h^*$ is $\mu_h$-strongly convex with respect to the Euclidean norm, then the assumption is verified for instance when the variance is bounded in $\ell_2$ norm, as ${\|\nabla f_\xit(x^*)\|_{\nabla^2 h^*(z_t)}^2 \leq \mu_h^{-1} \|\nabla f_\xit(x^*)\|^2}$ (we used the fact that if $h$ is $\mu_h$-strongly convex, then $h^*$ is $1/\mu_h$-smooth, see e.g., \citet{kakade2009duality}).

We now compare our noise assumption with~\citep[Assumption 5.1.]{hanzely2018fastest}, which writes: 
\begin{equation}\label{eq:hanzely_variance}
    \frac{1}{\eta_t} \espk{\xit}{(\nabla f(x_t) - \nabla f_\xit(x_t))^\top (x_{t+1} - \bar{x}_{t+1})}\! \leq \! \sigma^2,
\end{equation}
for $t \geq 0$, where $g_t$ is the stochastic gradient estimate and $\bar{x}_{t+1}$ is the output of the (theoretical) Bregman gradient step taken with the true gradient, that is, $\nabla h(\bar{x}_{t+1}) = \nabla h(x_t) - \eta_t \nabla f(x_t)$. Thus, their condition can be written:
\[ \frac{1}{\eta_t^2} \espk{\xit}{D_h(x_{t+1}, \bar{x}_{t+1}) + D_h(\bar{x}_{t+1}, x_{t+1})} \leq \sigma^2,\]
so that $\sigma^2$ bounds at each step the distance (in the Bregman sense) between $x_{t+1}$ and $\bar{x}_{t+1}$, the point that would be obtained by the expected (deterministic) gradient update. To illustrate why our assumption is weaker, let us consider the case where $h$ is $\mu_h$-strongly convex. In this setting, a sufficient condition for \eqref{eq:hanzely_variance} to hold is that 
\begin{equation} \label{eq:hanzely_variance_sc}
    \frac{1}{\mu_h} \espk{\xit}{\| \nabla f(x_t) - \nabla f_{\xit}(x_t)\|^2} \leq \sigma^2,
\end{equation}
while a sufficient condition for our variance definition to hold is (using that $\nabla f(x^\star) = 0$):
\begin{equation} \label{eq:variance_sc}
    \frac{1}{\mu_h} \espk{\xit}{\| \nabla f(x^\star) - \nabla f_\xit(x^\star)\|^2} \leq \sigma^2,
\end{equation}
which only depends on the magnitude of the gradients at the optimum instead of the variance along the full trajectory since $x_t$ is replaced by $x^\star$. In particular, in the \textit{interpolation setting} where $\nabla f_{\xi}(x^\star) = 0$ for every $\xi$, $\sigma^2 = 0$ with our condition. Besides, if $f$ is strongly convex then the norm of its gradients increases when far from the optimum, and so one needs to restrict $x_t$ to a compact set of $\R^d$ for a condition such as~\eqref{eq:hanzely_variance_sc} to hold. In contrast, the condition from~\eqref{eq:variance_sc} can hold globally without further assumptions.

\subsection{Convergence results}
We now prove the actual convergence theorems for Bregman SGD. To avoid notation clutter, we generally omit with respect to which variable expectations are taken when clear from the context. 

\begin{theorem}\label{thm:sgd_strg_convex}
If $f$ is $L_{f/h}$-smooth and $\mu_{f/h}$-strongly convex relative to $h$ with $\mu_{f/h} >0$, and Assumptions \ref{assumption:blanket} and \ref{assumption:noise} hold, then for ${\eta \leq 1 / (2L_{f/h})}$, the iterates produced by Bregman stochastic gradient \eqref{eq:iter_sgd} satisfy
\begin{equation}
    \esp{D_h(x^\star, x_t)} \leq (1 - \eta \mu_{f/h})^t D_h(x^\star, x_0) +  \eta\frac{\sigma^2}{\mu_{f/h}}.
\end{equation}
\end{theorem}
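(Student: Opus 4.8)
The plan is to track the one-step evolution of the Bregman divergence $D_h(x^\star, x_t)$ and show it contracts by a factor $(1-\eta\mu_{f/h})$ up to the additive noise term. The natural starting point is the Mirror Descent form of the update~\eqref{eq:iter_smd_explicit}, $\nabla h(x_{t+1}) = \nabla h(x_t) - \eta g_t$, combined with the dual identity from Lemma~\ref{lemma:duality}. The key algebraic object is the three-point/cosine identity for Bregman divergences: for any $x$, since $\nabla h(x_{t+1}) - \nabla h(x_t) = -\eta g_t$, one has
\begin{equation*}
D_h(x, x_{t+1}) = D_h(x, x_t) + \eta g_t^\top(x - x_t) - D_h(x_{t+1}, x_t) + \eta g_t^\top(x_t - x_{t+1}).
\end{equation*}
Specializing to $x = x^\star$ and regrouping, this expresses $D_h(x^\star, x_{t+1})$ in terms of $D_h(x^\star, x_t)$, a linear term $\eta g_t^\top(x^\star - x_t)$, and remainder terms involving $x_{t+1}$ that I will need to control.

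First I would take the conditional expectation over $\xi_t$. The linear term becomes $\eta \nabla f(x_t)^\top(x^\star - x_t)$ since $\espk{\xi_t}{g_t} = \nabla f(x_t)$, and relative strong convexity gives $\nabla f(x_t)^\top(x^\star - x_t) = -D_f(x_t,x^\star) - (f(x_t)-f(x^\star)) \leq -\mu_{f/h} D_h(x^\star, x_t) - D_f(x_t, x^\star)$, or more directly $\nabla f(x_t)^\top(x^\star - x_t) \le f(x^\star) - f(x_t) - \mu_{f/h} D_h(x^\star,x_t)$. This is where the contraction factor $(1-\eta\mu_{f/h})$ enters. The remaining step terms $-D_h(x_{t+1},x_t) + \eta g_t^\top(x_t - x_{t+1})$ must be bounded. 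Rewriting $\eta g_t^\top(x_t - x_{t+1})$ via the dual and recognizing $-D_h(x_{t+1}, x_t) = -D_{h^*}(\nabla h(x_t), \nabla h(x_{t+1}))$ by Lemma~\ref{lemma:duality}, I would use the Bregman cocoercivity of Lemma~\ref{lemma:bregman_cocoercivity_main} applied to the stochastic component $f_{\xi_t}$ to absorb the negative divergence against a $D_{f_{\xi_t}}$ term, leaving a residual that matches the noise definition in Assumption~\ref{assumption:noise}.

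The main obstacle is handling the cross term between the full gradient $\nabla f(x_t)$ and the stochastic gradient $g_t = \nabla f_{\xi_t}(x_t)$ so that the leftover is genuinely the \emph{variance at the optimum} rather than variance along the trajectory. The clean way is to split $g_t$ (or the relevant difference) around $\nabla f_{\xi_t}(x^\star)$ and use Lemma~\ref{lemma:bregman_young} to control the squared-gradient-type quantity: the factor-of-two inequality lets me separate the "useful" part (which pairs with $D_{f_{\xi_t}}(x_t,x^\star) \ge 0$ via cocoercivity and is discarded) from the "noise" part evaluated at $x^\star$. After taking expectations, the noise part is exactly bounded by $2\eta^2\sigma^2$ through the $D_{h^*}$ formulation in Assumption~\ref{assumption:noise}, which is why the constraint $\eta \le 1/(2L_{f/h})$ appears — it ensures the step size is small enough for cocoercivity to apply to the doubled gradient $2\eta\nabla f_{\xi_t}(x^\star)$ and for the $D_h(x_{t+1},x_t)$ term to be fully absorbed.

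Finally I would assemble the one-step bound into the form $\espk{\xi_t}{D_h(x^\star,x_{t+1})} \le (1-\eta\mu_{f/h})D_h(x^\star,x_t) + \eta^2\sigma^2$, take total expectation, and unroll the recursion. Summing the geometric series $\sum_{k=0}^{t-1}(1-\eta\mu_{f/h})^k \le 1/(\eta\mu_{f/h})$ turns the per-step $\eta^2\sigma^2$ into the stated steady-state term $\eta\sigma^2/\mu_{f/h}$, completing the proof.
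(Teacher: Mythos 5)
Your proposal is correct and follows essentially the same route as the paper: your three-point identity is exactly the paper's descent lemma (Lemma~\ref{lemma:descent}), and the split of $\eta g_t$ around $2\eta\nabla f_{\xi_t}(x^\star)$ via Lemma~\ref{lemma:bregman_young}, the cocoercivity bound of Lemma~\ref{lemma:bregman_cocoercivity_main} with the doubled step (whence $\eta \leq 1/(2L_{f/h})$), the use of Assumption~\ref{assumption:noise} to get the per-step $\eta^2\sigma^2$, and the geometric unrolling all match the paper's argument. The only slip is the argument order in your first strong-convexity display, which should read $\nabla f(x_t)^\top(x^\star - x_t) = -D_f(x^\star, x_t) - (f(x_t)-f(x^\star))$; your ``more directly'' inequality already states the correct bound, so the proof goes through unchanged.
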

Note that since we are in a Bregman setting, convergence is measured in terms of $D_h(x^\star, x_t)$, the distance between $x^\star$ and $x_t$ in the metric induced by $h$. If $h$ is $\mu_h$-strongly convex, then $D_h(x^\star, x_t) \geq \frac{\mu_h}{2}\|x_t - x^\star\|^2$ and convergence in $\ell_2$ distance is recovered.

\begin{proof}
By using Lemma~\ref{lemma:descent} from Appendix~\ref{app:sgd}
, we obtain:
\begin{align}
\label{eq:main_with_error_terms}
    \espk{\xit}{D_h(x^\star, x_{t+1})} &= D_h(x^\star, x_t) - \eta D_f(x^\star, x_t)  - \eta D_f(x_t, x^\star) + \espk{\xit}{D_h(x_t, x_{t+1})}.
\end{align}
Using Lemma~\ref{lemma:bregman_young}, the last term can be bounded as $D_h(x_t, x_{t+1}) \leq \frac{1}{2}\left[D_1 + D_2\right]$. We use Lemma~\ref{lemma:bregman_cocoercivity_main} (Bregman co-coercivity) to write:
\begin{align*}
    D_1 &= D_{h^*}(\nabla h(x_t) - 2\eta \left[\nabla f_{\xi_t}(x_t) - \nabla f_{\xi_t}(x^\star)\right], \nabla h(x_t))\leq 2 \eta D_{f_{\xi_t}}(x_t, x^\star),
\end{align*}
so that $\espk{\xit}{D_1 / 2} \leq \eta D_f(x_t, x^\star)$. Similarly,
\begin{equation}
     D_2 = D_{h^*}(\nabla h(x_t) - 2\eta \nabla f_\xi(x^\star), \nabla h(x_t)),
\end{equation}
so that $\espk{\xit}{D_2/2} \leq \eta^2 \sigma_t^2$. Thus, using the relative strong convexity of $f$ to bound the $D_f(x^\star, x_t)$ term, we obtain:
\begin{equation}
\label{eq:main_convex}
    \mathbb{E}_{\xit}D_h(x^\star, x_{t+1}) \leq (1 - \eta \mu_{f/h}) D_h(x^\star, x_t) + \eta^2 \sigma^2,
\end{equation}
which yields the desired result. 
\end{proof}

\begin{remark}[Interpolation]
    In the interpolation setting (when $\nabla f_{\xi_t}(x^\star) = 0$ for all $\xi_t$), we have that $\sigma^2 = 0$. Theorem~\ref{thm:sgd_strg_convex} thus proves linear convergence in this case. For instance, when solving objectives of the form $D_{\rm KL}(Ax, b)$ (which has applications in optimal transport~\citep{mishchenko2019sinkhorn}) or $D_{\rm KL}(b, Ax)$ (which has application in deblurring or tomographic reconstruction), then the variance as defined in~\citet{hanzely2018fastest} may be unbounded, whereas the variance as we define it is equal to $0$ if there exists $z$ such that $Az = b$.
\end{remark}

When $f$ is convex ($\mu_{f/h} = 0$), Theorem~\ref{thm:sgd_strg_convex} can be adapted to obtain a $1/T$ decrease of the error up to a noise region. 

\begin{theorem}[Convex case]\label{thm:sgd_convex}
Under the same assumptions as Theorem~\ref{thm:sgd_strg_convex}, if $\mu=0$, then
\begin{equation}\label{eq:thm_cvx_sgd}
    \esp{\frac{1}{T} \sum_{t=0}^T D_f(x^\star, x_t)} \leq \frac{D_h(x^\star, x_0)}{\eta T} + \eta \sigma^2
\end{equation}
\end{theorem}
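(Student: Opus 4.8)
The plan is to reuse the one-step inequality that underlies the proof of Theorem~\ref{thm:sgd_strg_convex}, but to stop short of invoking relative strong convexity. Concretely, I would start again from the identity~\eqref{eq:main_with_error_terms} and bound the term $\espk{\xit}{D_h(x_t, x_{t+1})}$ exactly as in that proof, via Lemma~\ref{lemma:bregman_young} and Lemma~\ref{lemma:bregman_cocoercivity_main}, which gives $\espk{\xit}{D_h(x_t, x_{t+1})} \leq \eta D_f(x_t, x^\star) + \eta^2 \sigma^2$. Substituting this into~\eqref{eq:main_with_error_terms}, the two $\eta D_f(x_t, x^\star)$ contributions cancel, leaving the clean recursion
\[ \espk{\xit}{D_h(x^\star, x_{t+1})} \leq D_h(x^\star, x_t) - \eta D_f(x^\star, x_t) + \eta^2 \sigma^2. \]
This is precisely the inequality reached in Theorem~\ref{thm:sgd_strg_convex} just before the strong convexity bound $D_f(x^\star, x_t) \geq \mu_{f/h} D_h(x^\star, x_t)$ is applied; since here $\mu_{f/h} = 0$, I would simply keep the $D_f(x^\star, x_t)$ term on the right-hand side instead of converting it into a contraction factor.

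The second step is to telescope rather than iterate. I would rearrange the recursion to isolate the suboptimality term,
\[ \eta\, D_f(x^\star, x_t) \leq D_h(x^\star, x_t) - \espk{\xit}{D_h(x^\star, x_{t+1})} + \eta^2 \sigma^2, \]
take total expectation using the tower property over the filtration generated by $\xi_0, \dots, \xi_{t-1}$, and sum the resulting inequalities over $t$. The $\esp{D_h(x^\star, x_t)}$ terms telescope, collapsing to $D_h(x^\star, x_0) - \esp{D_h(x^\star, x_{T+1})}$, and dropping the nonnegative final divergence (nonnegativity of $D_h$ follows from convexity of $h$ under Assumption~\ref{assumption:blanket}) yields
\[ \eta \sum_{t} \esp{D_f(x^\star, x_t)} \leq D_h(x^\star, x_0) + T \eta^2 \sigma^2. \]
Dividing through by $\eta T$ then gives the claimed bound~\eqref{eq:thm_cvx_sgd}.

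There is essentially no hard analytic step: all the technical effort was already spent on the one-step descent inequality for Theorem~\ref{thm:sgd_strg_convex}, and the only genuinely new idea is that the convex case calls for averaging and telescoping rather than for iterating a contraction. The one point requiring mild care is the bookkeeping of the summation range and the additive noise constant: summing $T$ terms produces $T\eta^2\sigma^2$ and hence the clean $\eta\sigma^2$ after division by $\eta T$, whereas running the index up to $t=T$ contributes a harmless $\tfrac{T+1}{T}\eta\sigma^2$ in place of $\eta\sigma^2$. I would also remark that $\tfrac1T\sum_t D_f(x^\star,x_t)$ is a legitimate convergence measure, since convexity of $f$ gives $D_f(x^\star, x_t) \geq 0$, so the left-hand side controls the average Bregman gap of the iterates to the optimum.
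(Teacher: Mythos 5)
Your proposal is correct and follows essentially the same route as the paper: the paper likewise reuses the one-step bound from Theorem~\ref{thm:sgd_strg_convex} via Lemma~\ref{lemma:descent}, keeps the $D_f(x^\star, x_t)$ term instead of applying relative strong convexity, and telescopes before dividing by $\eta T$. Your added remarks on the tower property and the $\tfrac{T+1}{T}$ bookkeeping for the summation range are just a more careful spelling-out of the same argument.
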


Contrary to the Euclidean case, we do not obtain a guarantee on the average iterate in general. This is because the bound is on the average of $D_f(x^\star, x_t)$ instead of $D_f(x_t, x^\star)$, and Bregman divergences are not necessarily convex in their second argument (except for the Euclidean distance and Kullback-Leibler divergence). Therefore, the final bound is obtained on $\min_t D_f(x^\star, x_t)$, meaning that there is at least one $x_t$ such that this is true. Note that the nice properties regarding interpolation still hold in this setting. 

\begin{proof}
We start from Lemma~\ref{lemma:descent} and bound the $D_h(x_t, x_{t+1})$ in the same way as when $\mu > 0$, which yields:
\begin{equation*}
        \eta D_f(x^\star, x_t) = D_h(x^\star, x_t) - \espk{\xit}{D_h(x^\star, x_{t+1})} + \eta^2 \sigma^2.
\end{equation*}
Averaging over $t$ and dividing by $\eta$ leads to~\eqref{eq:thm_cvx_sgd}.
\end{proof}

The simplicity of the proof and the generality of our technical lemmas also allow us to provide convergence results when $f$ is not convex:

\begin{theorem}[Non-convex case]\label{thm:sgd_non_convex}
If $f$ is $L_{f/h}$-smooth relatively to $h$ and Assumptions \ref{assumption:blanket} and \ref{assumption:noise} hold, then for ${\eta \leq 1 / (2L_{f/h})}$, the iterates produced by Bregman stochastic gradient \eqref{eq:iter_sgd} satisfy
\begin{equation}\label{eq:thm_non_cvx_sgd}
    \esp{\frac{1}{T} \sum_{t=0}^T D_f(x^\star, x_t)} \leq \frac{D_h(x^\star, x_0)}{\eta T} + \eta \sigma^2.
\end{equation}
\end{theorem}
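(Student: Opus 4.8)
The plan is to notice that the asserted bound is \emph{identical} to that of the convex case (Theorem~\ref{thm:sgd_convex}), and to argue that its proof transfers \emph{verbatim} because convexity of $f$ is never actually used. The only convexity invoked anywhere is that of the individual components $f_{\xit}$, which is supplied by Assumption~\ref{assumption:noise} and not by a relative-convexity hypothesis on $f$. Concretely, I would start from the descent identity of Lemma~\ref{lemma:descent},
\[ \espk{\xit}{D_h(x^\star, x_{t+1})} = D_h(x^\star, x_t) - \eta D_f(x^\star, x_t) - \eta D_f(x_t, x^\star) + \espk{\xit}{D_h(x_t, x_{t+1})}, \]
and observe that this identity is a pure consequence of the three-point property of Bregman divergences, the mirror update $\nabla h(x_{t+1}) = \nabla h(x_t) - \eta g_t$, and the stationarity $\nabla f(x^\star)=0$; in particular it requires no sign condition on $D_f$ and hence no convexity of $f$.

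The second step is to control the last term exactly as in the strongly convex proof. Using duality (Lemma~\ref{lemma:duality}) I write $D_h(x_t, x_{t+1}) = D_{h^*}(\nabla h(x_t) - \eta \nabla f_{\xit}(x_t), \nabla h(x_t))$, split the gradient as $\eta\nabla f_{\xit}(x_t) = \tfrac12\big(2\eta[\nabla f_{\xit}(x_t)-\nabla f_{\xit}(x^\star)]\big) + \tfrac12\big(2\eta\nabla f_{\xit}(x^\star)\big)$, and apply Lemma~\ref{lemma:bregman_young} to obtain $D_h(x_t,x_{t+1}) \le \tfrac12(D_1+D_2)$. The Bregman cocoercivity inequality (Lemma~\ref{lemma:bregman_cocoercivity_main}) applied to $f_{\xit}$ with step $2\eta$ then gives $\tfrac12 D_1 \le \eta D_{f_{\xit}}(x_t,x^\star)$, whose expectation equals $\eta D_f(x_t,x^\star)$ by $\espk{\xit}{f_{\xit}}=f$ and $\nabla f(x^\star)=0$; the step $2\eta$ is admissible precisely because $\eta \le 1/(2L_{f/h})$. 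The remaining term $\tfrac12 D_2$ is bounded in expectation by $\eta^2\sigma^2$ directly from Assumption~\ref{assumption:noise}. I stress that cocoercivity here only needs each $f_{\xit}$ to be convex and $L_{f/h}$-relatively smooth, both of which are granted by Assumption~\ref{assumption:noise}, so this step does not appeal to convexity of $f$.

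Substituting these bounds, the two $\eta D_f(x_t, x^\star)$ contributions cancel, leaving
\[ \espk{\xit}{D_h(x^\star, x_{t+1})} \le D_h(x^\star, x_t) - \eta D_f(x^\star, x_t) + \eta^2 \sigma^2. \]
I would then rearrange to isolate $\eta\,D_f(x^\star, x_t)$, take total expectation, sum over $t = 0,\dots,T$ so that the $D_h(x^\star,\cdot)$ terms telescope, discard the nonnegative terminal term $\esp{D_h(x^\star, x_{T+1})}\ge 0$, and divide through by $\eta T$ to recover~\eqref{eq:thm_non_cvx_sgd}.

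The only real thing to verify — and thus the main conceptual obstacle — is that no step covertly uses global convexity of $f$: the descent identity does not, the Young-type and cocoercivity lemmas use only convexity of the summands $f_{\xit}$ and relative smoothness, and the final rearrangement retains $D_f(x^\star, x_t)$ on the left without needing it to be signed. This is exactly why the rate of the convex case survives when the convexity of $f$ is dropped from the hypotheses. The sole quantitative subtlety is the harmless off-by-one in the averaging (the sum over $t=0,\dots,T$ has $T+1$ terms), which is absorbed into the $1/T$ factor.
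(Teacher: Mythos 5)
Your proposal is correct and follows essentially the same route as the paper, which gives no separate proof of Theorem~\ref{thm:sgd_non_convex} precisely because, as you observe, the argument for Theorem~\ref{thm:sgd_convex} (the descent identity of Lemma~\ref{lemma:descent}, the Young-type split via Lemma~\ref{lemma:bregman_young}, cocoercivity of the convex components $f_{\xi_t}$ from Lemma~\ref{lemma:bregman_cocoercivity_main} with step $2\eta$, cancellation of the $\eta D_f(x_t,x^\star)$ terms, and telescoping) never uses convexity of $f$ itself, only that of the $f_{\xi_t}$ granted by Assumption~\ref{assumption:noise}. Your remark about the harmless off-by-one in averaging over $t=0,\dots,T$ applies equally to the paper's own statement.
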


\section{Variance reduction}
\label{sec:variance_reduction}
We have shown in the previous section that BSGD enjoys guarantees that are similar to that of its Euclidean counterpart, although the notion of variance needs to be adapted. We show in this section that it is also possible to apply variance reduction to accelerate convergence. To this end, we solve for $n\in\mathbb{N}^*$ and some convex functions $f_i$:
\begin{equation} \label{eq:finite_sum_prob}
    \min_{x\in C} f(x):=\frac{1}{n}\sum_{i=1}^n f_i(x).
\end{equation}
The difference with Section~\ref{sec:sgd} is that we now assume that $f$ is a finite sum, which is required for variance reduction. We also assume that the minimizer $x^\star$ belongs to $\interior C$, so that $\nabla f(x^\star) = 0$. The case where $x^\star$ lies on the border of $C$ is more delicate, as $h$ might not be differentiable there (e.g., the log-barrier); this would require an involved technical analysis which we leave for future work.

To solve Problem~\eqref{eq:finite_sum_prob}, we consider Algorithm~\ref{algo:smd_vr}, which is a Bregman adaptation of the SAGA algorithm~\citep{Defazio2014}. Following its Euclidean counterpart, Algorithm~\ref{algo:smd_vr} stores the stochastic gradients computed at each iteration, and reuses them to estimate the full gradient. Therefore, only one stochastic gradient needs to be computed at each iteration, thus drastically reducing the iteration cost compared to batch gradient descent, at the expense of using more memory. Note that the stochastic updates are unbiased since $\espk{i}{g_t} = \nabla f(x_t)$, and at the optimum (when $x_t = \phi_i = x^\star$ for all $i$), $g_t = \nabla f(x^\star) = 0$ so the variance at the optimum is $0$ (contrary to SGD). We now study the convergence guarantees of Algorithm~\ref{algo:smd_vr} in more details. 

\begin{algorithm}[t]
\caption{Bregman-SAGA$( (\eta_t)_{t\ge 0}, x_0)$}
\label{algo:smd_vr}
\begin{algorithmic}[1]
\STATE $\phi_i=x_0$ for $i=1,...,n$
\vspace{0.5ex}
\FOR{$t=0,1,2,\ldots$}
\vspace{0.5ex}
\STATE Pick $i_t\in\{1,...,n\}$ uniformly at random
\vspace{0.5ex}
\STATE $g_t=\nabla f_{i_t}(x_t) - \nabla f_{i_t}(\phi^t_{i_t}) + \frac{1}{n} \sum_{j=1}^n \nabla f_j(\phi_j^t)$
\vspace{0.5ex}
\STATE $x_{t+1}=\arg \min_x \left\{\eta_t g_t^\top x + D_h(x, x_t)\right\}$
\vspace{0.5ex}
\STATE $\phi^{t+1}_{i_t} = x_t$, and store $\nabla f_{i_t}(\phi^{t+1}_{i_t})$.
\vspace{0.5ex}
\STATE $\phi^{t+1}_{j} = \phi_j^t$ for $j \neq i_t$.
\vspace{0.5ex}
\ENDFOR
\end{algorithmic}
\end{algorithm}

\subsection{Convergence Results}

For analyzing the Bregman-SAGA scheme, we first need to introduce, in addition to relative smoothness, an assumption on the regularity of $D_h$.
\begin{assumption}\label{assumption:regularity}
For all $i \in \{1, \cdots, n\}$, $f_i$ is $L_{f/h}$ relatively smooth w.r.t. $h$, and $f$ is $\mu_{f/h}$ relatively strongly convex w.r.t. $h$. Moreover, there exists a gain function $G$ such that for any $x,y,v \in \reals^d$ and $\lambda \in [-1,1]$, 
\begin{align*} 
  D_{h^*}\left(x + \lambda v, x \right) \leq G(x,y,v) \lambda^2 D_{h^*}\left(y + v, y \right).
\end{align*}
\end{assumption}
Such structural assumptions appear to be essential for analyzing Bregman-type methods that use information provided by gradients of past iterates. The function $G$ models the fact that the Bregman divergence $D_{h^*}(x+v,x)$ is not homogeneous nor invariant to translation in $x$ in general (except for the Euclidean case where it is equal to $\|v\|^2/2$). Note that such difficulties are also encountered for obtaining accelerated rates with inertial variants of Bregman descent, where similar assumptions are needed \cite{Hanzely2018}. This seems unavoidable, as suggested by the lower bound in \citet{dragomir2019optimal}.

Although the gain function $G$ is relatively abstract at this point, it plays a key role in defining the step-size, and convergence guarantees similar those of Euclidean SAGA can be obtained provided $G$ can be chosen small enough. 
We first state the general Theorem~\ref{thm:saga_G} (convergence proof for Algorithm~\ref{algo:smd_vr}), and then detail how $G$ can be bounded in several interesting cases. 

For $t\geq 0$ and step-sizes $\eta_t > 0$, define $H_t = \frac{1}{n}\sum_{i=1}^n D_{f_i}(\phi_i^t, x^\star)$, and the potential $\psi_t$  as follows:
\begin{equation}
    \psi_t= \frac{1}{\eta_t}D_h(x^\star, x_t) + \frac{n}{2} H_t.
\end{equation}
First note that by convexity of $h$ and of the $f_i$, $\psi_t \geq 0$ for all $t$. Our goal in this section is to show that $\{\psi_t\}_{t\geq 0}$ converges to $0$ at a given speed. Indeed, since $D_h(x^\star, x_t) \leq \psi_t$, this implies (as in Section~\ref{sec:sgd}) that $x_t$ converges to $x^\star$ at the same rate. To ease notations, we define
\begin{equation}
    \bar{\alpha}^t = \frac{1}{n}\sum_{j=1}^n \nabla f_j (\phi_j^t), \text{ and } \bar{\alpha}_i^t = \nabla f_i(\phi_i^t) - \bar{\alpha}^t.
\end{equation}

\begin{theorem}\label{thm:saga_G}
Assume that Algorithm \ref{algo:smd_vr} is run with a step size sequence $\{\eta_t\}_{t\geq 0}$ satisfying $\eta_t = 1 / (8L_{f/h} G_t)$ for every $t\geq 0$, with $G_t$ decreasing in $t$ and such that for all $j \in \{1, \cdots, n\}$:
\begin{equation*}
\begin{split}
    G_t \geq& G\left(\nabla h(x_t), \nabla h(x_t),\frac{1}{L_{f/h}}(\nabla f_j(x_t) - \nabla f_j(x^\star))\right),\\
    G_t \geq& G\Big(\nabla h(x_t) - 2 \eta_t \bar{\alpha}^t, \nabla h(\phi_j^t), \frac{1}{L_{f/h}}(\nabla f_j(\phi_j^t) - \nabla f_j(x^\star))\Big).
\end{split}
\end{equation*}
Then, under Assumptions \ref{assumption:blanket} and \ref{assumption:regularity}, the potential $\psi_t$ satisfies
\begin{equation}
    \espk{i_t}{\psi_{t+1}}\le \left(1-\min\left(\eta_t \mu_{f/h},\frac{1}{2n}\right)\right) \psi_t,
\end{equation}
In the convex case ($\mu_{f/h} = 0$), we obtain that
\begin{equation} \label{eq:main_saga_cvx}
    \esp{\frac{1}{4T} \sum_{t=1}^TD_f(x_t, x^\star) + H_t} \leq \frac{\psi_0}{T}.
\end{equation}
\end{theorem}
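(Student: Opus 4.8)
The plan is a Lyapunov argument: establish a one-step contraction for $\psi_t = \frac{1}{\eta_t}D_h(x^\star, x_t) + \frac{n}{2}H_t$ by controlling its two pieces and recombining them so the cross terms telescope. Because $G_t$ is decreasing, $\eta_t$ is nondecreasing, so $1/\eta_{t+1}\le 1/\eta_t$ and I may replace $1/\eta_{t+1}$ by $1/\eta_t$ in $\psi_{t+1}$ before taking $\espk{i_t}{\cdot}$; this is the only use of step-size monotonicity. The memory term is immediate from the update rule and uniform sampling: $\espk{i_t}{H_{t+1}}=(1-\frac1n)H_t+\frac1n D_f(x_t,x^\star)$. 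For the iterate term, the three-point identity behind Lemma~\ref{lemma:descent}, with $\espk{i_t}{g_t}=\nabla f(x_t)$ and $\nabla f(x^\star)=0$, gives $\espk{i_t}{D_h(x^\star,x_{t+1})}=D_h(x^\star,x_t)-\eta_t(D_f(x^\star,x_t)+D_f(x_t,x^\star))+\espk{i_t}{D_h(x_t,x_{t+1})}$, reducing everything to the displacement $\espk{i_t}{D_h(x_t,x_{t+1})}$.

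To bound the displacement I would split the scaled direction as $\eta_t g_t=\frac12 g_1+\frac12 g_2$, with $g_1=2\eta_t(\nabla f_{i_t}(x_t)-\nabla f_{i_t}(x^\star))$ and $g_2=2\eta_t(\bar\alpha^t-(\nabla f_{i_t}(\phi^t_{i_t})-\nabla f_{i_t}(x^\star)))$, and apply Lemma~\ref{lemma:bregman_young} with duality (Lemma~\ref{lemma:duality}) to get $D_h(x_t,x_{t+1})\le\frac12(D_1+D_2)$ where $D_k=D_{h^*}(\nabla h(x_t)-g_k,\nabla h(x_t))$. For $D_1$, I match its displacement to $\lambda v$ with $|\lambda|=2\eta_t L_{f/h}\le1$ and $v\propto\nabla f_{i_t}(x_t)-\nabla f_{i_t}(x^\star)$, use the first gain hypothesis to reduce to a unit-displacement divergence, and bound that by cocoercivity (Lemma~\ref{lemma:bregman_cocoercivity_main}) by $\frac1{L_{f/h}}D_{f_{i_t}}(x_t,x^\star)$; with $\eta_t=1/(8L_{f/h}G_t)$ the constants collapse to $\espk{i_t}{D_1}\le\frac{\eta_t}{2}D_f(x_t,x^\star)$. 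The step size is tuned precisely so that this, after cancelling the $-\eta_t D_f(x_t,x^\star)$ above, leaves a strictly negative margin in $D_f(x_t,x^\star)$; the looser direct-cocoercivity constant $2\eta_t$ would not.

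I expect $D_2$ to be the main obstacle, because the control variate $\bar\alpha^t$ creates a base-point mismatch: the second gain hypothesis is centred at $\nabla h(x_t)-2\eta_t\bar\alpha^t$, whereas $D_2$ is based at $\nabla h(x_t)$. I would reconcile them with the three-point identity for $h^*$ on $a=\nabla h(x_t)-2\eta_t\bar\alpha^t+2\eta_t(\nabla f_{i_t}(\phi^t_{i_t})-\nabla f_{i_t}(x^\star))$, $b=\nabla h(x_t)-2\eta_t\bar\alpha^t$, $c=\nabla h(x_t)$: writing $D_2=D_{h^*}(a,c)=D_{h^*}(a,b)+D_{h^*}(b,c)+(\nabla h^*(b)-\nabla h^*(c))^\top(a-b)$ and taking $\espk{i_t}{\cdot}$, the identity $\espk{i_t}{\nabla f_{i_t}(\phi^t_{i_t})}=\bar\alpha^t$ gives $\espk{i_t}{a-b}=\nabla h(x_t)-\nabla h(\tilde x_t)$ with $\nabla h(\tilde x_t)=b$, so the last two terms collapse exactly into $-D_h(\tilde x_t,x_t)\le0$ and may be dropped. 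The surviving $\espk{i_t}{D_{h^*}(a,b)}$ is precisely what the second gain hypothesis and cocoercivity bound, giving $\espk{i_t}{D_2}\le\frac{\eta_t}{2}H_t$. This cancellation, in which unbiasedness of the stored gradients absorbs the non-translation-invariance of $D_{h^*}$, is the delicate point of the argument.

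Assembling, I would substitute these bounds into the iterate recursion and add $\frac{n}{2}\espk{i_t}{H_{t+1}}$ to obtain $\espk{i_t}{\psi_{t+1}}\le\frac1{\eta_t}D_h(x^\star,x_t)-D_f(x^\star,x_t)-\frac14 D_f(x_t,x^\star)+(\frac{n}{2}-\frac14)H_t$. Relative strong convexity $D_f(x^\star,x_t)\ge\mu_{f/h}D_h(x^\star,x_t)$ contracts the iterate term by $1-\eta_t\mu_{f/h}$ and the memory term by $1-\frac1{2n}$, giving the rate $\min(\eta_t\mu_{f/h},\frac1{2n})$. When $\mu_{f/h}=0$ with constant $\eta$, dropping $D_f(x^\star,x_t)\ge0$ leaves $\frac14(D_f(x_t,x^\star)+H_t)\le\psi_t-\espk{i_t}{\psi_{t+1}}$, which telescopes over $t=1,\dots,T$, using $\esp{\psi_1}\le\psi_0$, into~\eqref{eq:main_saga_cvx}.
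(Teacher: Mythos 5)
Your proposal is correct and follows essentially the same route as the paper's proof: the same potential $\psi_t$ with $1/\eta_{t+1}\le 1/\eta_t$ from the monotonicity of $G_t$, the descent identity of Lemma~\ref{lemma:descent}, the Bregman--Young split of $\eta_t g_t$ into the same $D_1$ and $D_2$, the gain hypotheses combined with cocoercivity (Lemma~\ref{lemma:bregman_cocoercivity_main}) giving $\espk{i_t}{D_1}\le \tfrac{\eta_t}{2}D_f(x_t,x^\star)$ and $\espk{i_t}{D_2}\le \tfrac{\eta_t}{2}H_t$, and the recursion $\espk{i_t}{H_{t+1}}=(1-\tfrac1n)H_t+\tfrac1n D_f(x_t,x^\star)$, assembled with identical constants in both the strongly convex and convex cases. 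The only cosmetic difference is that where the paper handles the base-point mismatch in $D_2$ by invoking the Bregman variance decomposition (Lemma~\ref{lemma:breg_variance_standard}), you re-derive the same inequality inline via the three-point identity for $h^*$ and unbiasedness of the stored gradients; the nonnegative term $D_h(\tilde x_t,x_t)$ you drop is exactly the divergence $D_{h^*}(\esp{X},u)$ discarded in that lemma's proof.
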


\begin{proof}[Proof] Similarly to BSGD, we apply Lemma~\ref{lemma:descent} (Appendix~\ref{app:sgd}), which yields
\begin{align}
\label{eq:main_with_error_terms_vr}
    \begin{split}
    \espk{i_t}{D_h(x^\star, x_{t+1})} =  D_h(x^\star, x_t) - \eta_t D_f(x^\star, x_t) 
    - \eta_t D_f(x_t, x^\star) + \espk{i_t}{D_h(x_t, x_{t+1})}.
    \end{split}
\end{align}
Lemmas~\ref{lemma:duality} and~\ref{lemma:bregman_young} yield $D_h(x_t, x_{t+1})\leq (D_1 + D_2)/2$, with 
\begin{align*}
    &D_1 = D_{h^*}(\nabla h(x_t) - 2\eta_t \left[\nabla f_i(x_t) - \nabla f_i(x^\star)\right], \nabla h(x_t)),\\
    &D_2 = D_{h^*}(\nabla h(x_t) - 2\eta_t( \nabla f_i(x^\star) - \bar{\alpha}_i^t), \nabla h(x_t)).
\end{align*}
Using Assumption~\ref{assumption:regularity} together with Lemma~\ref{lemma:bregman_cocoercivity_main}, we obtain: 
\begin{align*}
    D_1 &\leq 4 \eta_t^2 L_{f/h}^2 G_t \times  D_{h^*}(\nabla h(x_t) - \frac{1}{L_{f/h}}\left[\nabla f_i(x_t) - \nabla f_i(x^\star)\right], \nabla h(x_t))\\
    &\leq 4 \eta_t^2 L_{f/h} G_t D_{f_i}(x_t, x^\star).
\end{align*}
To bound the second term, we use Lemma \ref{lemma:breg_variance_standard}~\citep{pfau2013generalized} which is a Bregman version of the bias-variance decomposition. We write $V = 2 \eta_t \left[\nabla f_{i_t}(\phi_{i_t}^t) - \nabla f_{i_t}(x^\star)\right]$, so that $\espk{i_t}{V} = 2\eta_t \bar{\alpha}^t$ and: 
\begin{align*}
    &\espk{i_t}{D_2} =\espk{i_t}{D_{h^*}(\nabla h(x_t) - \espk{i_t}{V} + V, \nabla h(x_t))}\\
    &\leq \espk{i_t}{D_{h^*}(\nabla h(x_t) - \espk{i_t}{V} + V, \nabla h(x_t) - \espk{i_t}{V})}\\
    &\leq 4 \eta_t^2 L_{f/h}^2 G_t \mathbb{E}_{i_t} D_{h^*}\big( \nabla h(\phi_{i_t}^t) - L_{f/h}^{-1}\left[\nabla f_{i_t}(\phi_{i_t}^t) - \nabla f_{i_t}(x^\star)\right], \nabla h(\phi_{i_t}^t)\big)\\
    &\leq 4 \eta_t^2 G_t L_{f/h} \espk{i_t}{D_{f_i}(\phi_{i_t}^t, x^\star)}
\end{align*}
where we used the gain function for translation and rescaling the step size.
Following~\citet{hofmann2015variance}, we write:
\begin{equation} \label{eq:H_t_update}
    \espk{i_t}{H_{t+1}} = \left(1 - \frac{1}{n}\right)H_t + \frac{1}{n}D_f(x_t, x^\star).
\end{equation}
Therefore, we can use the $- H_t / n$ term to control the excess term from bounding $D_h(x_t, x_{t+1})$. In the end, we obtain:
\begin{align*}
    \espk{i_t}{\psi_{t+1} - \psi_t} \leq - D_f(x^\star, x_t) - \left(\frac{1}{2} - 2\eta_t L_{f/h} G_t \right) H_t -\left(1 - 2\eta_t L_{f/h} G_t - \frac{1}{2}\right)D_f(x_t, x^\star).
\end{align*}
If we choose $\eta_t \leq 1/(8 L_{f/h} G_t)$ then the last term is positive and $1 - 4\eta_t L_{f/h} G_t \geq 1/2$.
If $\mu_{f/h} > 0$ then we use the relative strong convexity of $f$ to obtain that the right hand side is proportional to $\psi_t$, thus leading to a linear convergence rate. Otherwise, we obtain a telescopic sum, leading to the $1/T$ rate of Equation~\eqref{eq:main_saga_cvx}.
\end{proof} 

Note that the monotonicity of $\eta_t$ (through $G_t$) is a technical condition to ensure that the Lyapunov is non-increasing. Otherwise, $\psi_t$ could blow up even though $x_{t+1}$ is very close to $x_t$, simply because $\eta_t$ shrinks. It could be replaced by the condition that $\eta_t$ does not vary too much (not more than a factor $1 - O(1/n)$), which achieves the same goal. The rest of this section is devoted to shong that non-trivial $G_t$ can be chosen in many cases, thus leading to strong convergence guarantees. In particular, the rate recovers that of Euclidean SAGA in case $h$ is a quadratic form. 

\begin{corollary}\label{cor:saga_quad}
If $\nabla^2 h$ is constant ($h$ is quadratic), then Assumption~\ref{assumption:regularity} is satisfied with $G = 1$, so that
\begin{equation}
    \esp{\psi_t}\le \left(1-\min\left(\frac{1}{8\kappa_{f/h}}, \frac{1}{2n}\right)\right)^t\psi_0,
\end{equation}
where $\kappa_{f/h} = L_{f/h} / \mu_{f/h}$ is the relative condition number. 
\end{corollary}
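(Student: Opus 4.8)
The plan is to establish the two halves of the statement in turn: first that a quadratic reference function yields gain $G = 1$, and then that feeding this value into Theorem~\ref{thm:saga_G} produces the advertised linear rate. The corollary is really a specialization, so the burden of proof lies almost entirely in the first half.

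For the first part, I would start from the observation that if $h$ is quadratic with constant Hessian $\nabla^2 h = A$ (and $A \succ 0$ by the strict convexity in Assumption~\ref{assumption:blanket}), then its conjugate $h^*$ is also quadratic, with constant Hessian $\nabla^2 h^* = A^{-1}$. Consequently the Bregman divergence of $h^*$ loses its dependence on the base point and reduces to a fixed quadratic form in the displacement: for any $u,x$,
\[ D_{h^*}(u, x) = \tfrac{1}{2}\|u - x\|_{A^{-1}}^2. \]
Plugging in $u = x + \lambda v$ gives $D_{h^*}(x + \lambda v, x) = \tfrac{\lambda^2}{2}\|v\|_{A^{-1}}^2$, while $D_{h^*}(y + v, y) = \tfrac{1}{2}\|v\|_{A^{-1}}^2$ regardless of $y$. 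The inequality in Assumption~\ref{assumption:regularity} therefore holds with \emph{equality} when $G(x, y, v) \equiv 1$, uniformly in $\lambda \in [-1,1]$. This is the key and essentially only computation.

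With $G \equiv 1$ in hand, the second part is a direct application of Theorem~\ref{thm:saga_G}. Since the constant $1$ dominates both right-hand sides in the hypotheses of that theorem for every $t$, I may take the constant sequence $G_t = 1$, which is trivially non-increasing. The prescribed step size becomes $\eta_t = 1/(8 L_{f/h} G_t) = 1/(8 L_{f/h})$, constant in $t$, so that $\eta_t \mu_{f/h} = \mu_{f/h}/(8 L_{f/h}) = 1/(8\kappa_{f/h})$. Theorem~\ref{thm:saga_G} then yields the one-step contraction $\espk{i_t}{\psi_{t+1}} \leq \big(1 - \min(1/(8\kappa_{f/h}), 1/(2n))\big)\psi_t$.

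Finally I would take total expectations and unroll this recursion over $t$ via the tower property, obtaining the geometric decay $\esp{\psi_t} \leq \big(1 - \min(1/(8\kappa_{f/h}), 1/(2n))\big)^t \psi_0$ claimed in the corollary. I do not anticipate a genuine obstacle: the whole argument hinges on the single clean fact that quadratic Bregman divergences are translation-invariant in their base point. The only point deserving a line of care is the conjugacy fact that a quadratic $h$ has a quadratic $h^*$ with inverse Hessian, which is what guarantees $D_{h^*}$ has the required displacement-only form and hence that $G$ can be taken independent of the base points $x$ and $y$.
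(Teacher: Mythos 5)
Your proposal is correct and follows exactly the route the paper intends: verify that a quadratic $h$ makes $D_{h^*}(u,x)=\tfrac{1}{2}\|u-x\|_{\nabla^2 h^{-1}}^2$ translation-invariant and $2$-homogeneous, so Assumption~\ref{assumption:regularity} holds with $G\equiv 1$ (indeed with equality), then instantiate Theorem~\ref{thm:saga_G} with the constant sequence $G_t=1$ and $\eta_t = 1/(8L_{f/h})$ and unroll the one-step contraction. The paper leaves this verification implicit (it also remarks the result follows from Corollary~\ref{thm:saga_cst_step} with $\mu_h=1$, $L_f=L_{f/h}$), and your explicit computation of $h^*$ supplies precisely the missing detail.
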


If $h$ is not quadratic, but $f^*$ and $h^*$ are regular with respect to a norm, then strong guarantees can also be obtained:

\begin{corollary} \label{thm:saga_cst_step}
If $h^*$ is $\mu_h^{-1}$-smooth and $f^*$ is $L_f^{-1}$-strongly convex with respect to a norm $\| \cdot \|^2$, then the stepsize can be chosen constant as $\eta_t = \frac{\mu_h}{8L_{f}}$, and
\begin{equation}
    \esp{\psi_t}\le \left(1- \min\left(\frac{\mu_h \mu_{f/h}}{8L_f}, \frac{1}{2n}\right)\right)^t\psi_0.
\end{equation}
\end{corollary}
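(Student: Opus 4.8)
The plan is to deduce this corollary directly from Theorem~\ref{thm:saga_G} by exhibiting an admissible gain that is constant in $t$, namely $G_t \equiv G = \frac{L_f}{\mu_h L_{f/h}}$. With this value the prescribed step size becomes $\eta_t = \frac{1}{8 L_{f/h} G} = \frac{\mu_h}{8 L_f}$, a constant, so the monotonicity requirement on $G_t$ is vacuous, and the contraction factor $1 - \min(\eta_t \mu_{f/h}, \frac{1}{2n})$ collapses to exactly $1 - \min(\frac{\mu_h \mu_{f/h}}{8 L_f}, \frac{1}{2n})$, as claimed. Everything thus reduces to checking that this constant $G$ is large enough to support the two gain-function steps used in the proof of Theorem~\ref{thm:saga_G}.

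Rather than verifying the abstract inequality of Assumption~\ref{assumption:regularity} verbatim, I would re-derive directly the two bounds on $D_1$ and $\espk{i_t}{D_2}$ that $G_t$ is there to provide. Two ingredients suffice: first, that $h^*$ being $\mu_h^{-1}$-smooth gives $D_{h^*}(s + w, s) \le \frac{1}{2\mu_h}\|w\|^2$ for all $s, w$; and second, that each $f_i$ being $L_f$-smooth (equivalently, each $f_i^*$ being $L_f^{-1}$-strongly convex) yields the standard co-coercivity bound $\|\nabla f_i(u) - \nabla f_i(x^\star)\|^2 \le 2 L_f D_{f_i}(u, x^\star)$. Applying the first bound to $D_1$ with $w = 2\eta_t(\nabla f_i(x_t) - \nabla f_i(x^\star))$ and then the second gives $D_1 \le \frac{2\eta_t^2}{\mu_h}\|\nabla f_i(x_t) - \nabla f_i(x^\star)\|^2 \le \frac{4\eta_t^2 L_f}{\mu_h} D_{f_i}(x_t, x^\star)$. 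For $D_2$, one observes that its first argument differs from the base point $\nabla h(x_t)$ exactly by $V - \espk{i_t}{V}$, with $V = 2\eta_t(\nabla f_{i_t}(\phi_{i_t}^t) - \nabla f_{i_t}(x^\star))$; the same two steps (smoothness of $h^*$, then discarding the $-\|\espk{i_t}{V}\|^2$ variance term and applying co-coercivity) yield $\espk{i_t}{D_2} \le \frac{4\eta_t^2 L_f}{\mu_h}\espk{i_t}{D_{f_{i_t}}(\phi_{i_t}^t, x^\star)}$. Both right-hand sides coincide with the quantity $4\eta_t^2 L_{f/h} G_t D_{f_i}(\cdot, x^\star)$ appearing in the proof of Theorem~\ref{thm:saga_G} precisely when $G = \frac{L_f}{\mu_h L_{f/h}}$, so the remainder of that proof carries over unchanged with this constant gain.

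The subtle point, and what I expect to be the only real obstacle, is that, unlike the quadratic case of Corollary~\ref{cor:saga_quad}, one cannot here verify the gain-function inequality $D_{h^*}(x + \lambda v, x) \le G \lambda^2 D_{h^*}(y + v, y)$ of Assumption~\ref{assumption:regularity} literally. Bounding its left-hand side is easy via smoothness of $h^*$, but the inequality also demands a positive lower bound on the right-hand side $D_{h^*}(y + v, y)$, and smoothness of $h^*$ provides no such lower bound: a merely smooth $h^*$ can be flat in the direction $v$, making this divergence arbitrarily small. The resolution is exactly the reformulation above, namely re-deriving the two concrete divergence bounds directly, so that one only ever needs an \emph{upper} bound on a Bregman divergence of $h^*$ together with co-coercivity of the smooth $f_i$, and never a lower bound on $D_{h^*}$. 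Once the two bounds are in place, invoking Theorem~\ref{thm:saga_G} with the constant gain and step size finishes the proof.
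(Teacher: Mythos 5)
Your proof is correct and takes essentially the same route as the paper: its own (terse) proof of Corollary~\ref{thm:saga_cst_step} likewise bypasses Assumption~\ref{assumption:regularity} and re-derives the bounds on the two error terms by comparison with the norm, paying a factor $\mu_h^{-1}$ to bound $D_{h^*}$ by the squared norm and a factor $L_f$ (Euclidean co-coercivity, i.e.\ strong convexity of $f^*$) to bound the squared norm by $D_{f_i}$, before plugging the resulting constant gain into the argument of Theorem~\ref{thm:saga_G}. Even your closing caveat mirrors the paper's remark that invoking Assumption~\ref{assumption:regularity} verbatim would require smoothness of $h$ (a lower bound on $D_{h^*}$) and replace $L_f$ by the looser $L_{f/h}L_h$.
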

Note that following~\citet{kakade2009duality}, having $h^*$ be $\mu_h^{-1}$-smooth is equivalent to having $h$ be $\mu_h$ strongly-convex.
\begin{proof}
The proof follows the same step as the proof of Theorem~\ref{thm:saga_G}, but the translation invariance and homogeneity are obtained by comparison with the norm, instead of using Assumption~\ref{assumption:regularity}. Thus, we pay a factor $\mu_h^{-1}$ when bounding $D_{h^*}$ by the norm, and a factor $L_f$ when bounding the norm by $D_{f^*}$. It is also possible to directly use Assumption~\ref{assumption:regularity}, but in this case the $L_f$ factor is replaced by $L_{f/h} L_h$, which is an upper bound on $L_f$, and may thus be slightly looser.
\end{proof}

Note that Corollary~\ref{cor:saga_quad} is actually a consequence of Corollary~\ref{thm:saga_cst_step}, since $\mu_h = 1$ and $L_f = L_{f/h}$ if $D_h$ is a norm itself. Otherwise, the constant $G_t$ is chosen in a rather pessimistic way, and depends on the difference between directly bounding $D_f$ by $D_h$ (in which case we pay a factor $L_{f/h}$), or going through a norm $\| \cdot \|$ in the middle (in which we case we pay $L_f / \mu_h \geq L_{f/h}$). 

As stated at the beginning of this section, one of the problems is that Bregman divergences lack translation  invariance and homogeneity. However, as the algorithm converges, one can expect these conditions to hold locally, as $D_{h^*}(x+v,x)$ is approximated by $\frac{1}{2}\|v\|^2_{\nabla^2 h^*(x^*)}$ for small enough $v$, and $x$ close enough to $x^*$.
This is indeed what happens under enough regularity assumptions on $h$.

\begin{proposition}\label{prop:gain_hess_lip}
    If $h$ is $L_h$-smooth and the Hessian $\nabla^2 h^*$ is $M$-smooth, then the gain function can be chosen as: 
    \[ G(x,y,v) = 1 + 2 M L_h \left(\|y-x\| + \|v\|\right) .\]
\end{proposition}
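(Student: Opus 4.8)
The plan is to prove the inequality defining $G$ in Assumption~\ref{assumption:regularity}, namely $D_{h^*}(x+\lambda v, x) \le G(x,y,v)\,\lambda^2 D_{h^*}(y+v,y)$, by writing both Bregman divergences through the exact second-order (integral Taylor) remainder formula and comparing the two resulting Hessian integrals. Under the hypotheses, $h$ being $L_h$-smooth is equivalent (by the duality recalled after Corollary~\ref{thm:saga_cst_step}) to $h^*$ being $\tfrac{1}{L_h}$-strongly convex, and $h^*$ is twice continuously differentiable with $M$-Lipschitz Hessian; these are exactly the two ingredients the constant $1 + 2ML_h(\|y-x\|+\|v\|)$ is built from.

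First I would recall that for any twice continuously differentiable convex $\phi$, $D_\phi(p,q) = \int_0^1 (1-s)\,(p-q)^\top \nabla^2\phi(q+s(p-q))\,(p-q)\,ds$. Applying this to $h^*$ with $(p,q) = (x+\lambda v, x)$ gives $D_{h^*}(x+\lambda v,x) = \lambda^2 \int_0^1 (1-s)\, v^\top \nabla^2 h^*(x+s\lambda v)\,v\,ds$, and with $(p,q)=(y+v,y)$ gives $D_{h^*}(y+v,y) = \int_0^1 (1-s)\, v^\top \nabla^2 h^*(y+sv)\,v\,ds$. This already isolates the factor $\lambda^2$, so the task reduces to comparing the two remaining integrals, which differ only through the argument of $\nabla^2 h^*$.

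Next I would control the difference of the two integrands pointwise in $s$. By $M$-Lipschitzness of $\nabla^2 h^*$, $v^\top[\nabla^2 h^*(x+s\lambda v) - \nabla^2 h^*(y+sv)]v \le M\,\|(x-y)+s(\lambda-1)v\|\,\|v\|^2$, and since $s\in[0,1]$ and $\lambda\in[-1,1]$ give $|\lambda-1|\le 2$, the displacement is bounded uniformly by $\|x-y\|+2\|v\|$. Integrating against $(1-s)$ (whose mass is $\tfrac12$) then yields $\int_0^1 (1-s)\, v^\top \nabla^2 h^*(x+s\lambda v)\, v\,ds \le D_{h^*}(y+v,y) + \tfrac12 M(\|x-y\|+2\|v\|)\|v\|^2$. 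The final step is to re-express the leftover $\|v\|^2$ in terms of the target divergence: strong convexity of $h^*$ gives $D_{h^*}(y+v,y)\ge \tfrac{1}{2L_h}\|v\|^2$, i.e.\ $\|v\|^2 \le 2L_h D_{h^*}(y+v,y)$, so the error term is at most $M L_h(\|x-y\|+2\|v\|)\,D_{h^*}(y+v,y)$. Collecting terms gives $D_{h^*}(x+\lambda v,x) \le \lambda^2\big(1 + M L_h(\|x-y\|+2\|v\|)\big)\,D_{h^*}(y+v,y)$, which is in turn bounded by the stated $\lambda^2\big(1+2M L_h(\|y-x\|+\|v\|)\big)\,D_{h^*}(y+v,y)$, so one may take $G$ as claimed.

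The one step requiring care — and the main obstacle — is obtaining a constant that is \emph{uniform} in both the integration variable $s$ and the scaling $\lambda$: the naive pointwise bound carries a displacement $s(\lambda-1)v$, and it is crucial that the constraint $\lambda\in[-1,1]$ forces $|\lambda-1|\le 2$ so that the Lipschitz displacement (and hence $G$) does not depend on $\lambda$. The only other nontrivial point is justifying the conversion of $\|v\|^2$ back into $D_{h^*}(y+v,y)$, which is exactly where the smoothness hypothesis on $h$ (equivalently, strong convexity of $h^*$) enters, and explains the appearance of the $L_h$ factor alongside $M$.
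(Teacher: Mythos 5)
Your proof is correct and follows essentially the same route as the paper's: both express $D_{h^*}(x+\lambda v,x)$ via the integral Taylor remainder (your $(1-s)$-weighted single integral is just Fubini applied to the paper's double integral $\int_0^1\int_0^t$), compare the Hessian integrands via $M$-Lipschitzness of $\nabla^2 h^*$ with the same bound $|\lambda-1|\le 2$ on the displacement, and convert the leftover $\|v\|^2$ back into $D_{h^*}(y+v,y)$ through the $\tfrac{1}{L_h}$-strong convexity of $h^*$. The only cosmetic difference is that you bound the displacement uniformly in $s$ while the paper keeps the $2s\|v\|$ term under the integral; both intermediate constants are dominated by the stated $1+2ML_h\left(\|y-x\|+\|v\|\right)$, so your conclusion matches the proposition.
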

Note that, even if the regularity conditions of Proposition \ref{prop:gain_hess_lip} do not hold globally (such as for problems with unbounded curvature), they are at least valid on every bounded subset of $\interior C$, as soon as $h$ is $C^3$ on $\interior C$.
We now explicit a possible explicit choice for $G_t$ in this setting.
\begin{corollary} \label{corr:G_t}
    Assume that $h$ is $L_h$-smooth, $\mu_h$-strongly convex and that the Hessian $\nabla^2 h^*$ is $M$-smooth. Then, there exists an explicit constant $C$ such that if Algorithm \ref{algo:smd_vr} is run with a step size ${\eta_t = 1/(8L_{f/h}G_t)}$ with $G_t$ decreasing and satisfying
    \begin{equation}\label{eq:G_t_verifiable}
        \begin{split}
            G_t &\geq\min \Bigg(\frac{L_{f/h}L_h}{\mu_h},1  + C \Big( \sum_{j=1}^n \|x_t - \phi_j^t\| + \|\sum_{j=1}^n \nabla f_j(\phi_j^t)\|\Big)\Bigg),
        \end{split}
    \end{equation}
    then we have the convergence rate
    \begin{equation}
    \espk{i_t}{\psi_{t+1}}\le \left(1-\min\left(\frac{1}{8 G_t \kappa_{f/h}},\frac{1}{2n}\right)\right) \psi_t,
    \end{equation}
    where $\lim_{t \rightarrow \infty }G_t = 1$, or, more precisely,
    \begin{equation}\label{eq:G_t_psi_t}
        \esp{G_t} \leq 1 + \mathcal O \left(1 - \min\left(\frac{1 }{8 \kappa_h \kappa_{f/h}},\frac{1}{2n}\right)\right)^t.
    \end{equation}
\end{corollary}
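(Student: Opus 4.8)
The plan is to deduce the corollary from Theorem~\ref{thm:saga_G}: it suffices to exhibit an admissible gain sequence of the stated form, after which the contraction factor follows by substituting $\eta_t = 1/(8L_{f/h}G_t)$ into the generic rate $1 - \min(\eta_t\mu_{f/h}, \tfrac{1}{2n})$, which becomes exactly $1 - \min(\tfrac{1}{8G_t\kappa_{f/h}}, \tfrac{1}{2n})$. The true gain required at step $t$ admits two simultaneous upper bounds: the global one $L_{f/h}L_h/\mu_h$ obtained through the norm-comparison argument of Corollary~\ref{thm:saga_cst_step} (using $L_f \le L_{f/h}L_h$), and the local one $1 + 2ML_h(\|y-x\| + \|v\|)$ from Proposition~\ref{prop:gain_hess_lip}. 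Being below both, it is below their minimum, so it is enough to show that the right-hand side of~\eqref{eq:G_t_verifiable} dominates the local bound evaluated at the two argument triples appearing in Theorem~\ref{thm:saga_G}.

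I would first instantiate Proposition~\ref{prop:gain_hess_lip} at those two triples. For the first the base points coincide, leaving only $\|v\| = L_{f/h}^{-1}\|\nabla f_j(x_t) - \nabla f_j(x^\star)\|$; for the second there is in addition a dual displacement $\|\nabla h(x_t) - 2\eta_t\bar{\alpha}^t - \nabla h(\phi_j^t)\|$ and a primal term $L_{f/h}^{-1}\|\nabla f_j(\phi_j^t) - \nabla f_j(x^\star)\|$. Three regularity facts turn these into the computable quantities $\sum_j \|x_t - \phi_j^t\|$ and $\|\sum_j \nabla f_j(\phi_j^t)\|$: each $f_j$ is $L_{f/h}L_h$-smooth in the Euclidean sense (relative smoothness composed with $L_h$-smoothness of $h$), so gradient differences become distances; $h$ is $L_h$-smooth, so $\|\nabla h(x_t) - \nabla h(\phi_j^t)\| \le L_h\|x_t - \phi_j^t\|$, while the residual $2\eta_t\|\bar{\alpha}^t\| = \tfrac{2\eta_t}{n}\|\sum_j \nabla f_j(\phi_j^t)\|$ is already computable.

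The step I expect to be the main obstacle is eliminating the unobservable $x^\star$ from the surviving terms $\|\nabla f_j(x_t) - \nabla f_j(x^\star)\|$ and $\|\phi_j^t - x^\star\|$. My plan is to route through the stored iterate, $\nabla f_j(x_t) - \nabla f_j(x^\star) = [\nabla f_j(x_t) - \nabla f_j(\phi_j^t)] + [\nabla f_j(\phi_j^t) - \nabla f_j(x^\star)]$ and $\|\phi_j^t - x^\star\| \le \|\phi_j^t - x_t\| + \|x_t - x^\star\|$, which isolates a computable part and reduces everything to controlling $\|x_t - x^\star\|$. Here I would use the two defining features of variance reduction: $\sum_j \nabla f_j(x^\star) = n\nabla f(x^\star) = 0$, so that $n\nabla f(x_t) = \sum_j[\nabla f_j(x_t) - \nabla f_j(\phi_j^t)] + \sum_j \nabla f_j(\phi_j^t)$ has norm at most $L_{f/h}L_h\sum_j\|x_t - \phi_j^t\| + \|\sum_j \nabla f_j(\phi_j^t)\|$; and the Euclidean $\mu_{f/h}\mu_h$-strong convexity of $f$, which gives $\|x_t - x^\star\| \le (\mu_{f/h}\mu_h)^{-1}\|\nabla f(x_t)\|$. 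Chaining these expresses every term as a linear combination of the two computable quantities with a prefactor uniform in $j$; collecting all constants $(M, L_h, L_{f/h}, \mu_h, \mu_{f/h}, n)$ defines $C$, and intersecting with the global bound via the $\min$ certifies admissibility of $G_t$. The monotonicity of $G_t$ required by Theorem~\ref{thm:saga_G} I would enforce by a running minimum (or by invoking the relaxed slowly-varying-step-size condition noted after that theorem), which only strengthens the contraction.

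Finally, for the asymptotic bound~\eqref{eq:G_t_psi_t} I would show that both computable quantities are $\mathcal O(\sqrt{\psi_t})$ in expectation. The gradient sum is clean: by Euclidean co-coercivity of the $L_{f/h}L_h$-smooth $f_j$, $\|\nabla f_j(\phi_j^t) - \nabla f_j(x^\star)\|^2 \le 2L_{f/h}L_h\,D_{f_j}(\phi_j^t, x^\star)$, and Cauchy--Schwarz together with $\tfrac{n}{2}H_t \le \psi_t$ gives $\|\sum_j \nabla f_j(\phi_j^t)\| = \mathcal O(\sqrt{\psi_t})$. The term $\|x_t - x^\star\| = \mathcal O(\sqrt{\psi_t})$ follows from $\tfrac{\mu_h}{2}\|x_t - x^\star\|^2 \le D_h(x^\star, x_t) \le \eta_t\psi_t$; the only delicate point is $\sum_j\|x_t - \phi_j^t\|$, where the stored points must be tracked through their last refresh times, so that $\esp{\|\phi_j^t - x^\star\|^2}$ inherits the geometric decay of $\esp{\psi_s}$ up to the SAGA memory factor $1 - \tfrac1n$. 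Since $G_t \le L_{f/h}L_h/\mu_h$ at every step, $\esp{\psi_t}$ contracts geometrically, and passing the square root through Jensen yields $\esp{G_t} = 1 + \mathcal O((1 - \min(\tfrac{1}{8\kappa_h\kappa_{f/h}}, \tfrac1{2n}))^t)$, the precise constant in the exponent being immaterial to the claim.
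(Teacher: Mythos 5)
Your proposal is correct and, in architecture, coincides with the paper's own proof: instantiate Proposition~\ref{prop:gain_hess_lip} at the two triples of Theorem~\ref{thm:saga_G}, convert the $x^\star$-dependent terms into the two computable quantities using the Euclidean regularity implied by relative regularity (each $f_j$ is $L_hL_{f/h}$-smooth, $f$ is $\mu_h\mu_{f/h}$-strongly convex) together with $\sum_j \nabla f_j(x^\star) = n\nabla f(x^\star) = 0$, absorb everything into an explicit $C$, keep the $L_{f/h}L_h/\mu_h$ safeguard tied to Corollary~\ref{thm:saga_cst_step}, and obtain the asymptotics from $G_t \leq 1 + \mathcal{O}(\sqrt{\psi_t})$ combined with geometric decay of $\esp{\psi_t}$ under the safeguard and Jensen's inequality. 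The differences are in the plumbing and mostly wash out in constants: the paper bounds $\|\nabla f_{i_t}(x_t) - \nabla f_{i_t}(x^\star)\|$ via cocoercivity, $D_{f_{i_t}} \leq n D_f$ and $D_f(x_t,x^\star) \leq \frac{1}{2\mu_h\mu_{f/h}}\|\nabla f(x_t)\|^2$, where you go through $\|x_t - x^\star\| \leq (\mu_h\mu_{f/h})^{-1}\|\nabla f(x_t)\|$ and smoothness. The one place you genuinely diverge is $\sum_j \|x_t - \phi_j^t\|$ in the asymptotic step: your refresh-time bookkeeping is unnecessary, because $H_t = \frac{1}{n}\sum_j D_{f_j}(\phi_j^t, x^\star)$ already sits inside the Lyapunov function ($\frac{n}{2}H_t \leq \psi_t$), so the paper simply writes $\|\phi_j^t - x^\star\| \leq \sqrt{\tfrac{2}{\mu_h\mu_{f/h}} D_{f_j}(\phi_j^t, x^\star)} = \mathcal{O}(\sqrt{\psi_t})$ in one line. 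That said, this one-liner tacitly lower-bounds each individual $D_{f_j}(\cdot, x^\star)$ by a strongly convex quadratic, whereas Assumption~\ref{assumption:regularity} only grants relative strong convexity of $f$, not of the $f_j$; your heavier route (and likewise your bound $\|\sum_j \nabla f_j(\phi_j^t)\| \leq \sum_j \|\nabla f_j(\phi_j^t) - \nabla f_j(x^\star)\| = \mathcal{O}(\sqrt{n H_t})$, which needs only smoothness of the $f_j$ and is cleaner than the paper's detour through $x_t$) sidesteps that wrinkle at the cost of extra bookkeeping, so both treatments are defensible and yield the claimed rate for $\esp{G_t}$, with the same harmless looseness in the exponent coming from Jensen.
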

The explicit expression for the constant $C$ is provided in Appendix~\ref{app:saga} along with the proof. Although the result involves smoothness constants of $h$ which can be large in the relatively-smooth setting, this dependence disappears asymptotically. Hence, after some time $t$, which we can roughly estimate using Equation~\eqref{eq:G_t_psi_t}, we obtain that $G_t = O(1)$. Thus, we reach the same kind of convergence rate as in the ideal quadratic case, which depends only on the \textit{relative} condition number $\kappa_{f/h}$, but with more general functions $h$, and thus possibly much better conditioning. Besides, the order of magnitude required for $G_t$ can be estimated during the optimization process using Equation~\eqref{eq:G_t_verifiable}.

\subsection{Remarks on adaptivity}
Assumption~\ref{assumption:regularity} highlights the fact that the key difficulty is purely geometric, and that in general we need to make up for the lack of translation invariance and homogeneity of Bregman divergences. Although Corollary~\ref{corr:G_t} gives a criterion for $G_t$ that can be evaluated throughout training (since the constant $C$ is explicit), several approximations are required to obtain it, and it may be loose overall. Yet, for the theory to hold, it suffices to have $\eta_t$ small enough such that: 
\begin{align*}
        \mathbb{E}_{i_t} D_h(x_t, x_{t+1}) \leq
    \frac{\eta_t}{4} \left[D_f(x_t, x^\star)  + \mathbb{E}_{i_t}D_{f_{i_t}}(\phi_{i_t}^t, x^\star)\right].
\end{align*}
Unfortunately, one would need to know $x^\star$ to evaluate such a condition, which is thus hard to use in practice. For the sake of clarity, we have only presented results for Bregman SAGA in this section. Yet, similar results hold for SVRG-style variance reduction, and we present them in Appendix~\ref{app:svrg}. An important difference is that in this case, $\phi_i^t = \phi_t$ for all $i$, and so the last term becomes $\esp{D_{f_i}(\phi_t, x^\star)|\mathcal{F}_t} = f(\phi_t) - f(x^\star)$ since $\nabla f(x^\star) = 0$, so we only need to know $f(x^\star)$ (or an estimation of it) in order to compute this criterion. In this case, we don't need to know the relative smoothness constant of the problem and the step-size can be set adaptively, similarly to~\citet{barre2020complexity}. Although it may be expensive to compute $D_f(x_t, x^\star)$ at each iteration, one can also approximate $f(x_t)$ on the fly, or only update $\eta_t$ periodically. On a side note, a similar criterion could be used for BSGD (without the second term in this case), in particular for over-parametrized problems for which we know that $f(x^\star) = 0$.

\section{Experiments}
\label{sec:experiments}

\begin{figure*}[ht!]
\subfigure[Poisson inverse problem (interpolation).]{
   \includegraphics[width=0.29\linewidth]{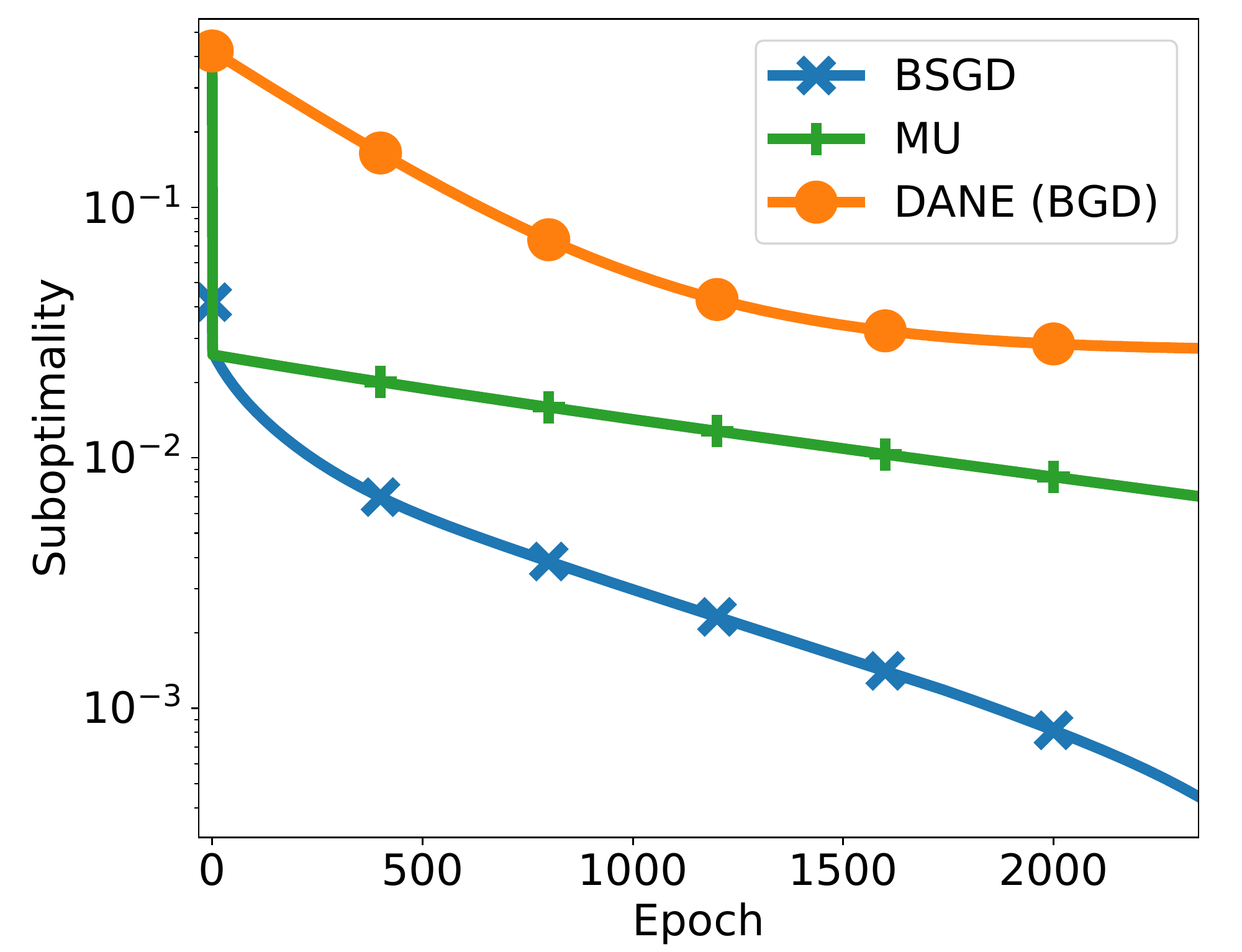}
    \label{fig:poisson_inverse}
}
\hfill
\subfigure[Tomographic reconstruction.]{
    \includegraphics[width=0.29\linewidth]{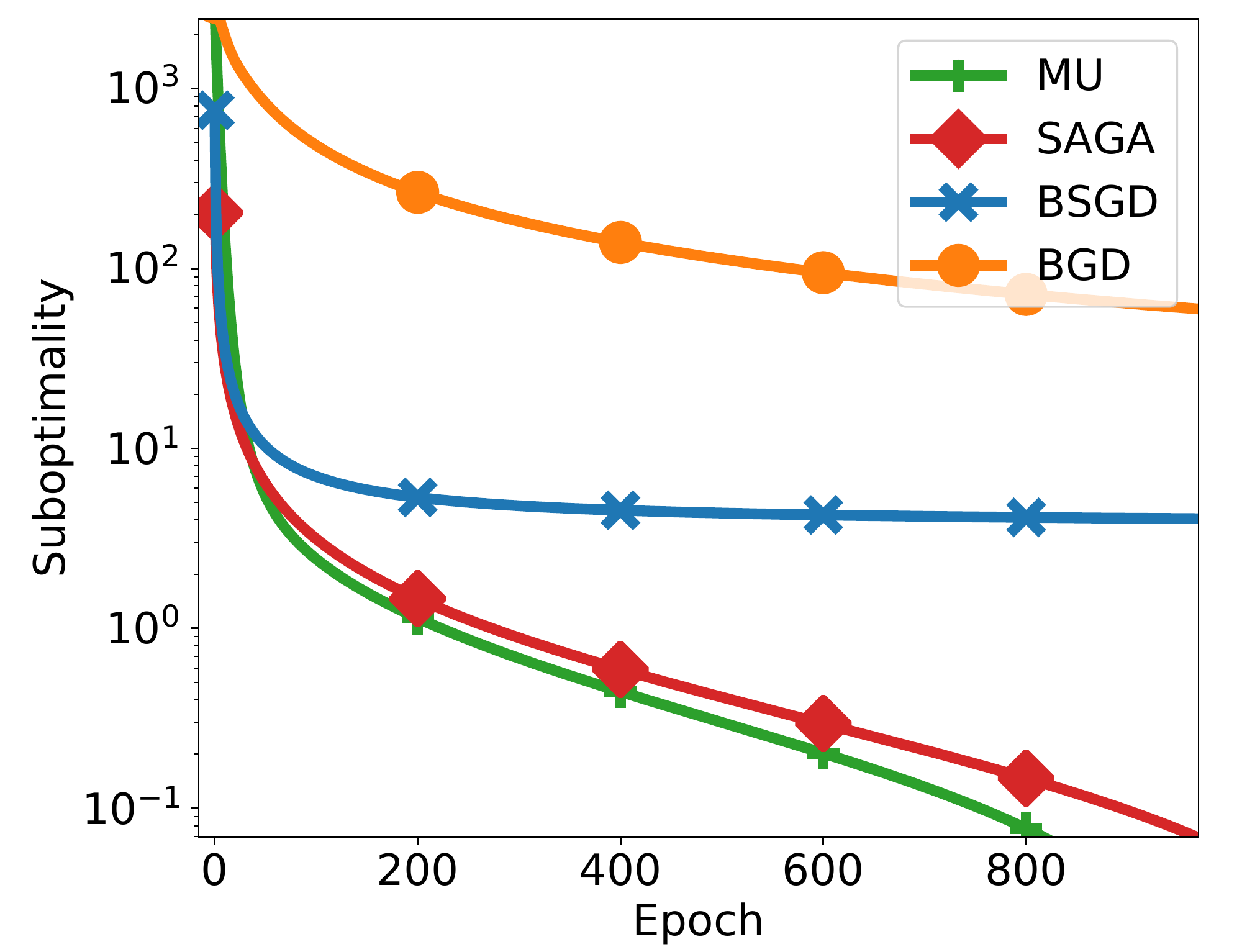}
    \label{fig:tomographic}
}
\hfill 
\subfigure[Distributed optimization.]{
    \includegraphics[width=0.29\linewidth]{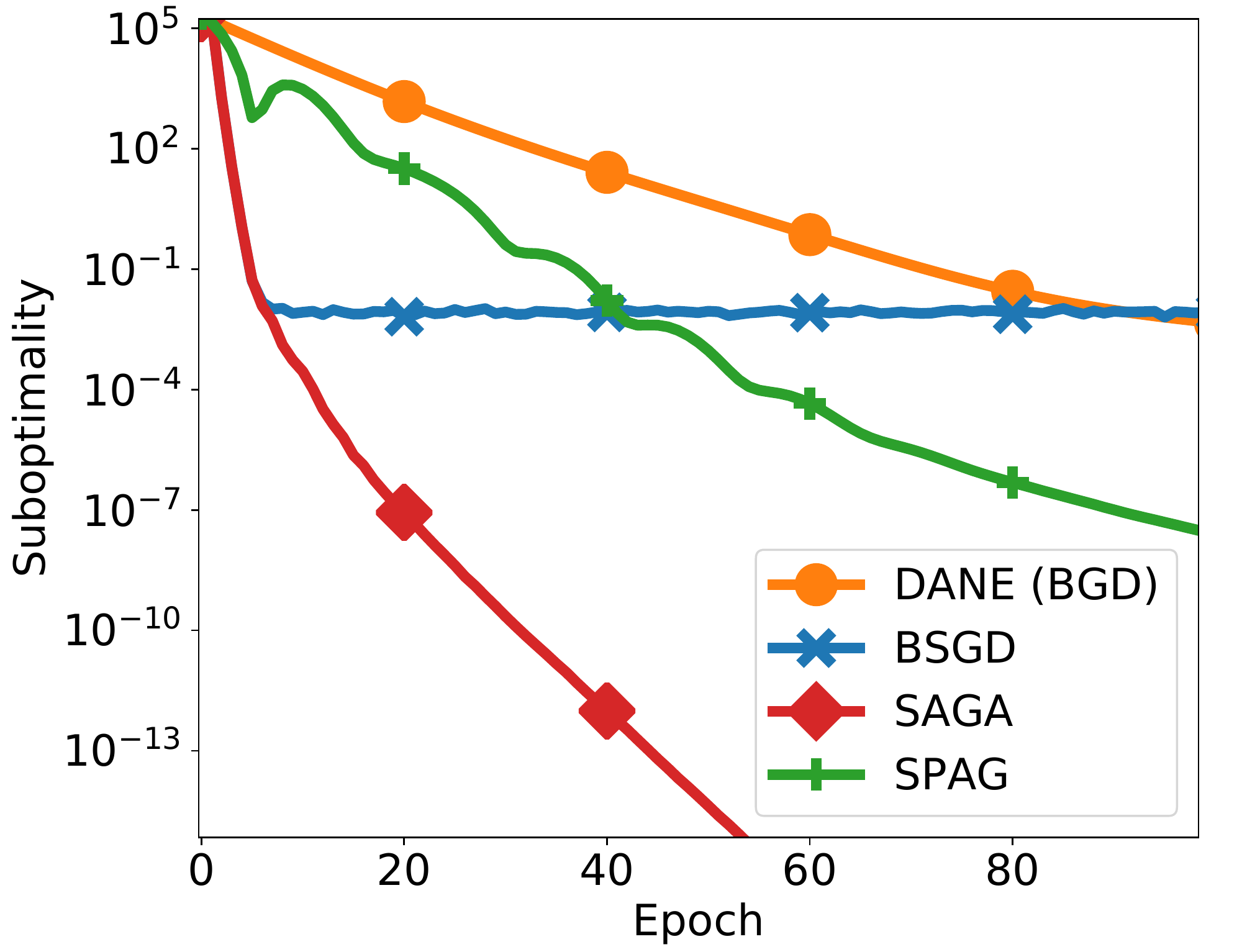}
    \label{fig:distrib_optim}
}\vspace{-10pt}
\caption{Bregman first-order methods on various applications.\label{fig:D_KL}
}
\end{figure*} 

In order to show the effectiveness of our method, we consider the two key settings mentioned in the introduction: problems with unbounded curvature (inverse problems with Poisson noise) and preconditioned distributed optimization. The first setting corresponds to the convex case ($\mu_{f/h} = 0$), whereas the second one corresponds to the relatively strongly convex case ($\mu_{f/h} > 0$). We observe that leveraging stochasticity (and, when needed, variance reduction) drastically improves the performance of Bregman methods in both cases. Additional details on the setting (such as the precise formulation of the objective or the relative smoothness constants) are given in Appendix~\ref{app:experiments}.

\subsection{Poisson inverse problems}
Figure~\ref{fig:poisson_inverse} considers the minimization problem $\min_{x \in \reals^{d}_{+}} f(x) = D_{\rm KL}(b, Ax)$, where $A \in \reals^{n \times d }_{+}$ and $D_{\rm KL}(u,v) = \sum_{i=1}^n u_i \log(u_i / v_i) - u_i + v_i$ is the Kullback-Leibler divergence. The goal is to recover an unknown signal $x_*$, observed through the matrix $A$ and corrupted by Poisson noise. This is a fundamental signal processing problem, with applications in astronomy and medicine (see \citet{Review2009} for a review). We use the log-barrier reference function, $h(x) = - \sum_i \log x_i$, for which relative smoothness holds with $L_{f/h}=\sum_{i=1}^n b_i/n$ \citep{Bauschke2017}.

We verify experimentally in this section that SGD is fast when the gradients at optimum are zero by first studying a problem where $b = Ax^\star$. $A \in \R^{n\times d}$ and $x \in \R^d$ are random (indices sampled uniformly between $0$ and $1$), with $n = 10000$ and $d=1000$. We compare the results of the deterministic and stochastic versions of Bregman Gradient descent. We also compare to the Multiplicative Updates (MU) algorithm, also known as Lucy-Richardson or Expectation-Maximization~\citep{MU-poisson}, which is a standard baseline for this problem. We observe that BGD is by far the slowest algorithm, but that BSGD is faster than Lucy-Richardson thanks to the stochastic speedup. We also observe that BSGD does not plateau in a noise region and converges to the true solution, which is consistent with Theorem~\ref{thm:sgd_convex}. The step-size for BGD and BSGD is chosen as $1 / L_{f/h}$, whereas Lucy-Richardson is parameter-free.

Figure~\ref{fig:tomographic} considers experiments on the tomographic reconstruction problem on the standard Shepp-Logan phantom~\citep{kak2002principles}. Due to space limitations, the main text mainly describes the results, but the setting details can be found in Appendix~\ref{app:experiments}. The step-size given by theory was rather conservative in this case, so we increased it by a factor of $5$ for all Bregman algorithms (and even 10 for BGD). Figure~\ref{fig:tomographic} shows again that stochastic algorithms drastically outperform BGD. Yet,  BSGD quickly reaches a plateau because of the noise. On the other hand, BSAGA enjoys variance reduction and fast convergence to the optimum. In this case, BSAGA is on par with MU, the state-of-the-art algorithm for this problem. This is because of the log barrier that allows relative smoothness to hold, but heavily slows down Bregman algorithms when coordinates are close to $0$. Yet, these results are encouraging and one may hope for even faster convergence of BSAGA for tomographic reconstruction with a tighter reference function. 

\subsection{Statistically Preconditioned Distributed Optimization}
In this section we consider the problem of solving a distributed optimization problem in which data is distributed among many workers. We closely follow the setting of~\citet{hendrikx2020dual}, and solve a logistic regression problem for the RCV1 dataset~\citep{lewis2004rcv1}. Function $h$ is taken as the same logistic regression objective as for the global objective $f$, but on a much smaller dataset of size $n_{\rm prec} = 1000$ and with an added regularization $c_{\rm prec} = 10^{-5}$. In this case, BGD corresponds to a widely used variant of DANE~\citep{shamir2014communication}, in which only the server performs the update. 
The stochastic updates in BSGD are obtained by subsampling a set of workers at each iteration, so that all the nodes do not have to participate in every iteration. Regularization is taken as $\lambda = 10^{-5}$, and there are $n=100$ nodes with $N=1000$ samples each. A fixed learning rate is used, and the best one is selected selected among $[0.025, 0.05, 0.1, 0.25, 0.5, 1.]$. BGD uses $\eta = 0.5$ while SAGA and BSGD use $\eta = 0.05$.
The x-axis represents the total number of communications (or number of passes over the dataset). Note that at each epoch, BGD communicates once with all workers (one round trip for each worker) whereas BSGD and BSAGA communicate $n$ times with one worker sampled uniformly at random each time. Therefore, BSAGA requires much less gradients from the workers to reach a given precision level, yet, it is at the cost of having to solve more local iterations.

Figure~\ref{fig:distrib_optim} first shows that BSAGA clearly outperforms BGD. BSGD on the other hand is as fast as BSAGA at the beginning of training, until it hits a variance region at which it saturates. This is consistent with the theory, and is similar to what can be observed in the Euclidean case. An interesting feature is that although the step-size has to be selected smaller than that of gradient descent (which is also the case in the Euclidean setting since $f$ is smoother than the least smooth $f_i$), choosing a constant step-size is enough to ensure convergence in this case, thus hinting at the fact that the analysis is rather conservative and that $G_t$ does not slow down the algorithm as much as we could have feared when far from the optimum. This is consistent with the results obtained by~\citet{hendrikx2020statistically} on acceleration.

\section{Conclusion}
\label{sec:conclusion}
Throughout the paper, we have (i) given tight convergence guarantees for Bregman SGD that allow to accurately describe its behaviour in the interpolation setting, and (ii) introduced and analyzed Bregman analogs to the standard variance-reduced algorithm SAGA. These convergence results require stronger assumptions on the objective than relative smoothness and strong convexity, but we show that fast rates can be obtained nonetheless when $h$ is nicely behaved (quadratic or Lipschitz Hessian). We also prove that these fast rates can be obtained for more general functions $h$ after a transient regime. Besides, we show experimentally that variance reduction greatly accelerates Bregman first-order methods for several key applications, including distributed optimization and tomographic reconstruction. In particular, there does not seem to be a slow transient regime in the applications considered, despite the lack of regularity of the objectives. This need for higher order regularity assumptions but great practical performance is consistent with  the results obtained for acceleration in the Bregman setting. Better understanding the transient regime (in which $G_t$ can be high) and finding better reference functions $h$ for the tomographic reconstruction problem are two promising extensions of our work.

{ \small
\section*{Acknowledgements} 
RD was supported by an AMX fellowship. RD would like to acknowledge support from the Air Force Office of Scientific Research, Air Force Material Command, USAF, under grant number  FA9550-19-1-7026/19IOE033 and FA9550-18-1-0226. HH was funded in part by the French government under management of Agence Nationale de la Recherche as part of the “Investissements d’avenir” program, reference ANR-19-P3IA-0001(PRAIRIE 3IA Institute). HH also acknowledges support from the European Research Council (grant SEQUOIA 724063) and from the MSR-INRIA joint centre.
}

\bibliography{library,library_radu}

\begin{thebibliography}{45}
\providecommand{\natexlab}[1]{#1}
\providecommand{\url}[1]{\texttt{#1}}
\expandafter\ifx\csname urlstyle\endcsname\relax
  \providecommand{\doi}[1]{doi: #1}\else
  \providecommand{\doi}{doi: \begingroup \urlstyle{rm}\Url}\fi

\bibitem[Allen-Zhu(2017)]{allen2017katyusha}
Zeyuan Allen-Zhu.
\newblock Katyusha: The first direct acceleration of stochastic gradient
  methods.
\newblock \emph{The Journal of Machine Learning Research}, 2017.

\bibitem[Antonakopoulos et~al.(2019)Antonakopoulos, Belmega, and
  Mertikopoulos]{antonakopoulos2019adaptive}
Kimon Antonakopoulos, Elena~Veronica Belmega, and Panayotis Mertikopoulos.
\newblock An adaptive mirror-prox algorithm for variational inequalities with
  singular operators.
\newblock In \emph{Advances in Neural Information Processing Systems}, 2019.

\bibitem[Antonakopoulos et~al.(2020)Antonakopoulos, Belmega, and
  Mertikopoulos]{antonakopoulos2020online}
Kimon Antonakopoulos, Elena~Veronica Belmega, and Panayotis Mertikopoulos.
\newblock Online and stochastic optimization beyond lipschitz continuity: A
  riemannian approach.
\newblock In \emph{International Conference on Learning Representations}, 2020.

\bibitem[Bach and Moulines(2011)]{moulines2011non}
Francis Bach and Eric Moulines.
\newblock Non-asymptotic analysis of stochastic approximation algorithms for
  machine learning.
\newblock In \emph{Advances in neural information processing systems}, 2011.

\bibitem[Barr{\'e} et~al.(2020)Barr{\'e}, Taylor, and
  d’Aspremont]{barre2020complexity}
Mathieu Barr{\'e}, Adrien Taylor, and Alexandre d’Aspremont.
\newblock Complexity guarantees for polyak steps with momentum.
\newblock In \emph{Conference on Learning Theory}, 2020.

\bibitem[Bauschke and Combettes(2011)]{Bauschke2011book}
Heinz Bauschke and Patrick Combettes.
\newblock \emph{Convex analysis and monotone operator theory in Hilbert
  spaces}.
\newblock 2011.

\bibitem[Bauschke and Borwein(1997)]{Bauschke1997}
Heinz~H. Bauschke and Jonathan~M. Borwein.
\newblock {Legendre Functions and the Method of Random Bregman Projections}.
\newblock \emph{Journal of Convex Analysis}, 4\penalty0 (1):\penalty0 27--67,
  1997.

\bibitem[Bauschke et~al.(2017)Bauschke, Bolte, and Teboulle]{Bauschke2017}
Heinz~H. Bauschke, J{\'{e}}r{\^{o}}me Bolte, and Marc Teboulle.
\newblock {A Descent Lemma Beyond Lipschitz Gradient Continuity: First-Order
  Methods Revisited and Applications}.
\newblock \emph{Mathematics of Operations Research}, 42\penalty0 (2):\penalty0
  330--348, 2017.

\bibitem[Beck and Teboulle(2003)]{beck2003mirror}
Amir Beck and Marc Teboulle.
\newblock Mirror descent and nonlinear projected subgradient methods for convex
  optimization.
\newblock \emph{Operations Research Letters}, 31\penalty0 (3):\penalty0
  167--175, 2003.

\bibitem[Bertero et~al.(2009)Bertero, Boccaci, Desidera, and
  Vicidomini]{Review2009}
M~Bertero, P~Boccaci, G~Desidera, and G~Vicidomini.
\newblock {Image deblurring with {P}oisson data: from cells to galaxies}.
\newblock \emph{Inverse Problems}, 25, 2009.

\bibitem[Bolte et~al.(2018)Bolte, Sabach, Teboulle, and
  Vaisbourd]{Bolte2018noncvx}
J{\'{e}}r{\^{o}}me Bolte, Shoham Sabach, Marc Teboulle, and Yakov Vaisbourd.
\newblock {First order methods beyond convexity and {L}ipschitz gradient
  continuity with applications to quadratic inverse problems}.
\newblock \emph{SIAM Journal on Optimization}, 28\penalty0 (3), 2018.

\bibitem[Bottou(2012)]{bottou2012stochastic}
L{\'e}on Bottou.
\newblock Stochastic gradient descent tricks.
\newblock In \emph{Neural networks: Tricks of the trade}. Springer, 2012.

\bibitem[Bubeck(2011)]{Bubeck2011}
S{\'{e}}bastien Bubeck.
\newblock {Introduction to online optimization}.
\newblock \emph{Lecture Notes}, 2011.

\bibitem[Davis et~al.(2018)Davis, Drusvyatskiy, and
  MacPhee]{Davis2018StochasticMM}
D.~Davis, D.~Drusvyatskiy, and Kellie~J. MacPhee.
\newblock Stochastic model-based minimization under high-order growth.
\newblock \emph{arXiv preprint arXiv:1807.00255}, 2018.

\bibitem[Defazio et~al.(2014)Defazio, Bach, and Lacoste-Julien]{Defazio2014}
Aaron Defazio, Francis Bach, and Simon Lacoste-Julien.
\newblock {SAGA}: a fast incremental gradient method with support for
  non-strongly convex composite objectives.
\newblock In \emph{Advances in Neural Information Processing Systems}, 2014.

\bibitem[Dragomir et~al.(2019)Dragomir, Taylor, d'Aspremont, and
  Bolte]{dragomir2019optimal}
Radu-Alexandru Dragomir, Adrien Taylor, Alexandre d'Aspremont, and
  J{\'e}r{\^o}me Bolte.
\newblock Optimal complexity and certification of {B}regman first-order
  methods.
\newblock \emph{arXiv preprint arXiv:1911.08510. To appear in Mathematical
  Programming}, 2019.

\bibitem[Even and Massoulié(2021)]{even2021concentration}
Mathieu Even and Laurent Massoulié.
\newblock Concentration of non-isotropic random tensors with applications to
  learning and empirical risk minimization, 2021.

\bibitem[Gao et~al.(2020)Gao, Lu, Liu, and Chu]{gao2020randomized}
Tianxiang Gao, Songtao Lu, Jia Liu, and Chris Chu.
\newblock Randomized {B}regman coordinate descent methods for non-{L}ipschitz
  optimization.
\newblock \emph{arXiv preprint arXiv:2001.05202}, 2020.

\bibitem[Gower et~al.(2019)Gower, Loizou, Qian, Sailanbayev, Shulgin, and
  Richt{\'a}rik]{gower2019sgd}
Robert~Mansel Gower, Nicolas Loizou, Xun Qian, Alibek Sailanbayev, Egor
  Shulgin, and Peter Richt{\'a}rik.
\newblock Sgd: General analysis and improved rates.
\newblock In \emph{International Conference on Machine Learning}, pages
  5200--5209. PMLR, 2019.

\bibitem[Hanzely and Richt{\'a}rik(2018)]{hanzely2018fastest}
Filip Hanzely and Peter Richt{\'a}rik.
\newblock Fastest rates for stochastic mirror descent methods.
\newblock \emph{arXiv preprint arXiv:1803.07374}, 2018.

\bibitem[Hanzely et~al.(2018)Hanzely, Richt, and Xiao]{Hanzely2018}
Filip Hanzely, Peter Richt, and Lin Xiao.
\newblock {Accelerated Bregman proximal gradient methods for relatively smooth
  convex optimization}.
\newblock \emph{ArXiv preprint arXiv:1808.03045v1}, 2018.

\bibitem[Hendrikx et~al.(2020{\natexlab{a}})Hendrikx, Bach, and
  Massouli{\'e}]{hendrikx2020dual}
Hadrien Hendrikx, Francis Bach, and Laurent Massouli{\'e}.
\newblock Dual-free stochastic decentralized optimization with variance
  reduction.
\newblock In \emph{Advances in Neural Information Processing Systems},
  2020{\natexlab{a}}.

\bibitem[Hendrikx et~al.(2020{\natexlab{b}})Hendrikx, Xiao, Bubeck, Bach, and
  Massouli\'e]{hendrikx2020statistically}
Hadrien Hendrikx, Lin Xiao, S\'ebastien Bubeck, Francis Bach, and Laurent
  Massouli\'e.
\newblock Communication-efficient distributed optimization using an approximate
  {N}ewton-type method.
\newblock In \emph{International Conference on Machine Learning},
  2020{\natexlab{b}}.

\bibitem[Hofmann et~al.(2015)Hofmann, Lucchi, Lacoste-Julien, and
  Mcwilliams]{hofmann2015variance}
Thomas Hofmann, Aurelien Lucchi, Simon Lacoste-Julien, and Brian Mcwilliams.
\newblock Variance reduced stochastic gradient descent with neighbors.
\newblock In \emph{Advances in Neural Information Processing Systems}, 2015.

\bibitem[Johnson and Zhang(2013)]{johnson2013accelerating}
Rie Johnson and Tong Zhang.
\newblock Accelerating stochastic gradient descent using predictive variance
  reduction.
\newblock \emph{Advances in Neural Information Processing Systems}, 2013.

\bibitem[Kak and Slaney(2001)]{kak2002principles}
Avinash~C. Kak and Malcolm Slaney.
\newblock \emph{Principles of computerized tomographic imaging}.
\newblock SIAM, 2001.

\bibitem[Kakade et~al.(2009)Kakade, Shalev-Shwartz, and
  Tewari]{kakade2009duality}
Sham Kakade, Shai Shalev-Shwartz, and Ambuj Tewari.
\newblock On the duality of strong convexity and strong smoothness: Learning
  applications and matrix regularization.
\newblock \emph{Unpublished Manuscript, \url{http://ttic.
  uchicago.edu/shai/papers/KakadeShalevTewari09.pdf}}, 2009.

\bibitem[Lewis et~al.(2004)Lewis, Yang, Rose, and Li]{lewis2004rcv1}
David~D Lewis, Yiming Yang, Tony~G Rose, and Fan Li.
\newblock {RCV1}: A new benchmark collection for text categorization research.
\newblock \emph{Journal of machine learning research}, 5\penalty0
  (Apr):\penalty0 361--397, 2004.

\bibitem[Lu(2019)]{lu2019relative}
Haihao Lu.
\newblock “{R}elative continuity” for non-{L}ipschitz nonsmooth convex
  optimization using stochastic (or deterministic) mirror descent.
\newblock \emph{INFORMS Journal on Optimization}, 1\penalty0 (4):\penalty0
  288--303, 2019.

\bibitem[Lu et~al.(2018)Lu, Freund, and Nesterov]{lu2018relatively}
Haihao Lu, Robert~M Freund, and Yurii Nesterov.
\newblock Relatively smooth convex optimization by first-order methods, and
  applications.
\newblock \emph{SIAM Journal on Optimization}, 2018.

\bibitem[Mishchenko(2019)]{mishchenko2019sinkhorn}
Konstantin Mishchenko.
\newblock Sinkhorn algorithm as a special case of stochastic mirror descent.
\newblock \emph{arXiv preprint arXiv:1909.06918}, 2019.

\bibitem[Nemirovsky and Yudin(1983)]{nemirovsky1983problem}
Arkadi{\u\i}~Semenovich Nemirovsky and David~Borisovich Yudin.
\newblock Problem complexity and method efficiency in optimization.
\newblock 1983.

\bibitem[Nesterov(2003)]{Nesterov2004}
Yuri Nesterov.
\newblock \emph{{Introductory lectures on convex optimization: A basic
  course}}.
\newblock Springer, 2003.

\bibitem[Nesterov(2019)]{Nesterov2019}
Yurii Nesterov.
\newblock {Implementable tensor methods in unconstrained convex optimization}.
\newblock \emph{Mathematical Programming}, 2019.

\bibitem[Pfau(2013)]{pfau2013generalized}
David Pfau.
\newblock A generalized bias-variance decomposition for bregman divergences.
\newblock \emph{Unpublished Manuscript,
  \url{http://davidpfau.com/assets/generalized_bvd_proof.pdf}}, 2013.

\bibitem[Reddi et~al.(2016)Reddi, Kone{\v{c}}n{\`y}, Richt{\'a}rik,
  P{\'o}cz{\'o}s, and Smola]{reddi2016aide}
Sashank~J. Reddi, Jakub Kone{\v{c}}n{\`y}, Peter Richt{\'a}rik, Barnab{\'a}s
  P{\'o}cz{\'o}s, and Alex Smola.
\newblock {AIDE}: Fast and communication efficient distributed optimization.
\newblock \emph{arXiv preprint arXiv:1608.06879}, 2016.

\bibitem[Schmidt et~al.(2013)Schmidt, Roux, and Bach]{SAG}
Mark Schmidt, Nicolas Roux, and Francis Bach.
\newblock Minimizing finite sums with the stochastic average gradient.
\newblock \emph{Mathematical Programming}, 162, 09 2013.

\bibitem[Shalev-Shwartz(2016)]{shalev2016sdca}
Shai Shalev-Shwartz.
\newblock {SDCA} without duality, regularization, and individual convexity.
\newblock In \emph{International Conference on Machine Learning}, 2016.

\bibitem[Shalev-Shwartz and Zhang(2013)]{shalev2013stochastic}
Shai Shalev-Shwartz and Tong Zhang.
\newblock Stochastic dual coordinate ascent methods for regularized loss
  minimization.
\newblock \emph{Journal of Machine Learning Research}, 2013.

\bibitem[Shamir et~al.(2014)Shamir, Srebro, and Zhang]{shamir2014communication}
Ohad Shamir, Nati Srebro, and Tong Zhang.
\newblock Communication-efficient distributed optimization using an approximate
  {N}ewton-type method.
\newblock In \emph{International Conference on Machine Learning}, 2014.

\bibitem[{Shepp} and {Vardi}(1982)]{MU-poisson}
L.~A. {Shepp} and Y.~{Vardi}.
\newblock Maximum likelihood reconstruction for emission tomography.
\newblock \emph{IEEE Transactions on Medical Imaging}, 1\penalty0 (2), 1982.

\bibitem[Shi et~al.(2017)Shi, Zhang, and Yu]{shi2017bregman}
Zhan Shi, Xinhua Zhang, and Yaoliang Yu.
\newblock Bregman divergence for stochastic variance reduction: Saddle-point
  and adversarial prediction.
\newblock In \emph{Advances in Neural Information Processing Systems}, 2017.

\bibitem[Yuan and Li(2020)]{yuan2020convergence}
Xiao-Tong Yuan and Ping Li.
\newblock On convergence of distributed approximate {N}ewton methods:
  Globalization, sharper bounds and beyond.
\newblock \emph{Journal of Machine Learning Research}, 2020.

\bibitem[Zhang and He(2018)]{zhang2018convergence}
Siqi Zhang and Niao He.
\newblock On the convergence rate of stochastic mirror descent for nonsmooth
  nonconvex optimization.
\newblock \emph{arXiv preprint arXiv:1806.04781}, 2018.

\bibitem[Zhou et~al.(2020)Zhou, Sanches~Portella, Schmidt, and
  Harvey]{zhou2020regret}
Yihan Zhou, Victor Sanches~Portella, Mark Schmidt, and Nicholas Harvey.
\newblock Regret bounds without {L}ipschitz continuity: Online learning with
  relative-{L}ipschitz losses.
\newblock \emph{Advances in Neural Information Processing Systems}, 33, 2020.

\end{thebibliography}
\bibliographystyle{plainnat}

\newpage

\appendix

This appendix is organized as follows. We start by providing a detailed comparison of assumptions and convergence rates for our algorithms and related work in Figure~\ref{fig:conv_rates}. Then, Section \ref{app:sgd} provides the missing proofs for Bregman SGD, and Section \ref{app:saga} for the variance-reduced scheme Bregman SAGA. Additionally, we also analyze in Section \ref{app:svrg} another variant based on the SVRG algorithm. Finally, Section \ref{app:experiments} lists additional details for the numerical applications.

\renewcommand{\arraystretch}{1.7}
\setlength{\tabcolsep}{10pt}

\begin{figure}[h]
\centering
\begin{tabular}{|m{8em}|m{5em}|m{13em}|m{13em}|} 
 \hline 
 Algorithm & Gradient noise & Regularity assumptions  & Convergence rate of $D_h(x^*,x_t)$\\
 \hline
 Gradient Descent  & deterministic & $\mu I \preceq \nabla^2 f \preceq L I$ & $\mathcal{O}\left( 1 - \frac{\mu}{n L}\right)^t$  \\
 \hline
 Bregman Gradient Descent & deterministic & $\mu_{f/h} \nabla^2 h \preceq \nabla^2 f \preceq L_{f/h} \nabla^2 h$ & $\mathcal{O}\left( 1 - \frac{\mu_{f/h}}{n L_{f/h}} \right)^t$\\
 \hline
 Stochastic Gradient Descent & variance bounded at $x^\star$  & $\mu I \preceq \nabla^2 f_\xi \preceq L I$ & $\mathcal{O}\left( \left(1 - \frac{\mu}{L}\right)^t + \frac{\sigma^2 L}{\mu}\right)$  \\
 \hline
 Bregman Stochastic Gradient Descent (Theorem~\ref{thm:sgd_strg_convex})& variance bounded at $x^\star$ & $\mu_{f/h} \nabla^2 h \preceq \nabla^2 f_\xi \preceq L_{f/h} \nabla^2 h$ & $\mathcal{O}\left( \left(1 - \frac{\mu_{f/h}}{L_{f/h}} \right)^t + \frac{\sigma^2 L_{f/h}}{\mu_{f/h}} \right)$\\
 \hline
 SAGA \citep{Defazio2014} & finite sum & $\mu I \preceq \nabla^2 f_i \preceq L I$ & $\mathcal{O}\left( 1 - \min\left( \frac{1}{4n},\frac{\mu}{3L}\right)\right)^t$  \\
 \hline
 Bregman-SAGA, Corollary \ref{cor:saga_quad}& finite sum & $\mu_{f/h} \nabla^2 h \preceq \nabla^2 f_i \preceq L_{f/h} \nabla^2 h$, \vspace{1.5mm} \newline $\nabla^2 h$ constant & $\mathcal{O}\left( 1 - \min\left(\frac{1}{2n}, \frac{\mu_{f/h}}{8 L_{f/h}}\right) \right)^t$\\
 \hline
 Bregman-SAGA, Corollary \ref{thm:saga_cst_step}& finite sum & $\mu_{f/h} \nabla^2 h \preceq \nabla^2 f_i \preceq L_f I$, \vspace{1.5mm} \newline ${\mu_h I \preceq \nabla^2 h}$& $\mathcal{O}\left( 1 - \min\left(\frac{1}{2n}, \frac{\mu_{f/h} \mu_h}{8 L_f}\right) \right)^t$\\
 \hline
 Bregman-SAGA, Corollary \ref{corr:G_t} & finite sum & $\mu_{f/h} \nabla^2 h \preceq \nabla^2 f_i \preceq L_{f/h} \nabla^2 h$, \vspace{1.5mm} \newline $\mu_h I \preceq \nabla^2 h \preceq L_h I$,\vspace{1.5mm} \newline$\nabla^2 h^*$ is $M$-smooth & $\mathcal{O}\left( 1 - \min\left(\frac{1}{2n}, \frac{\mu_{f/h}}{8 G_t L_{f/h}}\right) \right)^t$\vspace{2mm}\newline with $G_t \rightarrow 1$ as $t \rightarrow \infty$\\
 \hline
\end{tabular}
\caption{Summary of convergence rates for standard (stochastic) first-order methods, and their Bregman counterparts in different settings.}
\label{fig:conv_rates}
\end{figure}

\section{Missing proofs for Bregman SGD (Section~\ref{sec:sgd})}
\label{app:sgd}
\begin{replemma}{lemma:bregman_young}
Let $x^+$ be such that $\nabla h(x^+) = \nabla h(x) - g$, and similarly define $x^+_1$ and $x^+_2$ from $g_1$ and $g_2$. Then, if $g = \frac{g_1 + g_2}{2}$, we obtain:
$$D_h(x, x^+) \leq \frac{1}{2}\left[ D_h(x, x^+_1) + D_h(x, x^+_2)\right].$$
\end{replemma}

\begin{proof}
By Lemma~\ref{lemma:duality} (duality), we have:
\begin{align*}
    D_h(x, x^+) &= D_{h^*}(\nabla h(x^+), \nabla h(x))\\
    &= D_{h^*}(\nabla h(x) - g, \nabla h(x))\\
    &= D_{h^*}\left(\frac{1}{2}[\nabla h(x) - g_1] + \frac{1}{2}[\nabla h(x) - g_2], \nabla h(x)\right)\\
    &\leq \frac{1}{2} D_{h^*}\left(\nabla h(x) - g_1, \nabla h(x)\right) + \frac{1}{2}D_{h^*}\left(\nabla h(x) - g_2, \nabla h(x)\right),
\end{align*}
where the inequality step is obtained by the convexity of the Bregman divergence in its first argument. The final result is obtained by using duality back.
\end{proof}

\begin{lemma} \label{lemma:descent}
If $\nabla h(x_{t+1}) = \nabla h(x_t) - \eta_t g_t$ with $\esp{g_t} = \nabla f(x_t)$, $\nabla f(x^\star) = 0$, then: \begin{equation}
            D_h(x^\star, x_{t+1}) = D_h(x^\star, x_t) - \eta_t D_f(x^\star, x_t)  - \eta_t D_f(x_t, x^\star) + D_h(x_t, x_{t+1}).
    \end{equation}
\end{lemma}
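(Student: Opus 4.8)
The plan is to derive the identity from the \emph{three-point identity} for Bregman divergences, namely that for any admissible $a,b,c$,
$$D_h(a,c) = D_h(a,b) + D_h(b,c) + (\nabla h(b) - \nabla h(c))^\top(a-b).$$
This holds with equality (no convexity needed) and is obtained simply by expanding all three divergences according to the definition \eqref{eq:bregman_divergence} and cancelling the $h(\cdot)$ terms, leaving only the gradient cross-term. First I would apply it with $a = x^\star$, $b = x_t$, $c = x_{t+1}$, which gives
$$D_h(x^\star, x_{t+1}) = D_h(x^\star, x_t) + D_h(x_t, x_{t+1}) + (\nabla h(x_t) - \nabla h(x_{t+1}))^\top(x^\star - x_t),$$
and then substitute the mirror-descent update $\nabla h(x_{t+1}) = \nabla h(x_t) - \eta_t g_t$, so the cross-term becomes exactly $\eta_t\, g_t^\top(x^\star - x_t)$.

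The remaining step is to convert this inner product into the two $D_f$ terms. Writing out $D_f(x^\star, x_t) = f(x^\star) - f(x_t) - \nabla f(x_t)^\top(x^\star - x_t)$ and $D_f(x_t, x^\star) = f(x_t) - f(x^\star) - \nabla f(x^\star)^\top(x_t - x^\star)$ and using the hypothesis $\nabla f(x^\star) = 0$, the function-value terms cancel upon addition, yielding $D_f(x^\star, x_t) + D_f(x_t, x^\star) = -\nabla f(x_t)^\top(x^\star - x_t)$. Taking the conditional expectation over the randomness in $g_t$ (with $x_t$ and $x^\star$ fixed) and using unbiasedness $\esp{g_t} = \nabla f(x_t)$ replaces $\eta_t\, g_t^\top(x^\star - x_t)$ by $-\eta_t\big[D_f(x^\star, x_t) + D_f(x_t, x^\star)\big]$, which is precisely the stated identity.

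There is no real obstacle here; the argument is essentially an exact bookkeeping computation. The two points worth flagging are, first, the role of the expectation: the three-point identity together with the update is a deterministic equality in which the only random quantity is the scalar $g_t^\top(x^\star - x_t)$, and it is exactly taking $\esp{\cdot}$ and invoking unbiasedness that lets me trade $g_t$ for $\nabla f(x_t)$ and fold the inner product into the symmetric pair of divergences. Second, the hypothesis $\nabla f(x^\star) = 0$ is exactly what makes $D_f(x^\star, x_t) + D_f(x_t, x^\star)$ collapse to a single inner product; without it one would be left with an extra $\nabla f(x^\star)^\top(x_t - x^\star)$ term, so I would highlight this as the one substantive (if elementary) structural point of the proof.
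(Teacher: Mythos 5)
Your proof is correct, and it takes a genuinely (if mildly) different route from the paper's. You invoke the three-point identity $D_h(a,c) = D_h(a,b) + D_h(b,c) + (\nabla h(b) - \nabla h(c))^\top(a-b)$ directly with $(a,b,c) = (x^\star, x_t, x_{t+1})$ and substitute the explicit mirror update, whereas the paper argues through the proximal formulation: it sets $V_t(x) = \eta_t g_t^\top x + D_h(x, x_t)$, uses that $x_{t+1}$ is an interior minimizer (Assumption~\ref{assumption:blanket}), so $\nabla V_t(x_{t+1}) = 0$ and $V_t(x^\star) - V_t(x_{t+1}) = D_{V_t}(x^\star, x_{t+1}) = D_h(x^\star, x_{t+1})$ since $\nabla^2 V_t = \nabla^2 h$, and then splits $g_t^\top(x^\star - x_{t+1})$ into $g_t^\top(x^\star - x_t) + g_t^\top(x_t - x_{t+1})$, absorbing the second piece via the symmetric identity $D_h(x_{t+1},x_t) + D_h(x_t,x_{t+1}) = (\nabla h(x_t) - \nabla h(x_{t+1}))^\top(x_t - x_{t+1})$. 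The two derivations are algebraically equivalent — chaining the paper's steps reconstructs your three-point identity — but yours is the more direct one given that the lemma's hypothesis is already stated in the explicit mirror form $\nabla h(x_{t+1}) = \nabla h(x_t) - \eta_t g_t$; the paper's proximal phrasing buys a cleaner connection to the $\arg\min$ definition \eqref{eq:iter_sgd} and would degrade gracefully to an inequality in a genuinely constrained setting where only first-order optimality over $C$ holds (here interiority makes the two coincide). One imprecision in your commentary is worth correcting: after substituting the update, the cross-term $g_t^\top(x^\star - x_t)$ is not the \emph{only} random quantity, since $x_{t+1}$ — hence $D_h(x^\star, x_{t+1})$ and $D_h(x_t, x_{t+1})$ — also depends on $g_t$. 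This does not damage the argument: the identity holds pointwise for each realization, and taking the conditional expectation converts only the cross-term into $-\eta_t\left[D_f(x^\star, x_t) + D_f(x_t, x^\star)\right]$ while the remaining terms stay inside expectations, which is exactly the form in which the lemma is applied in Theorems~\ref{thm:sgd_strg_convex} and~\ref{thm:saga_G}; note that the paper's own statement of the lemma is equally informal about where the expectation sits, its proof taking $\esp{\cdot}$ only in Equation~\eqref{eq:breg_first_term}.
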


Note that this descent lemma is an equality, and we can then use standard assumptions to bound the different terms.
\begin{proof}
We start by writing $V_t(x) = \eta_t g_t^\top x + D_h(x, x_t)$. Since $x_{t+1}$ is defined as $\arg\min_x V_t(x)$ and by Assumption \ref{assumption:blanket}, we have $x_{t+1} \in \interior C$ then $\nabla V_t(x_{t+1}) = 0$ and so:
\begin{equation}
    V_t(x^\star) - V_t(x_{t+1}) = D_{V_t}(x^\star, x_{t+1}) = D_h(x^\star, x_{t+1}),
\end{equation}
since $\nabla^2 V_t = \nabla^2 h$. This writes:
\begin{equation}\label{eq:main_bregman}
    \eta_t g_t^\top (x^\star - x_{t+1}) + D_h(x^\star, x_{t}) - D_h(x_{t+1}, x_{t}) = D_h(x^\star, x_{t+1}).
\end{equation}
Then, we split the first term as $g_t^\top(x^\star - x_{t+1}) = g_t^\top(x^\star - x_{t}) + g_t^\top(x_t - x_{t+1})$. For the first term, we use the fact that $\esp{g_t} = \nabla f(x_t)$ and obtain
\begin{equation}\label{eq:breg_first_term}
    \esp{g_t^\top(x^\star - x_{t})} = - [\nabla f(x^\star) - \nabla f(x_t)]^\top(x^\star - x_t) = - D_f(x^\star, x_t) - D_f(x_t, x^\star),
\end{equation}
For the second term, we write:
\begin{align*}
    D_h(x_{t+1},x_t) + D_h(x_t, x_{t+1}) &= \langle \nabla h(x_t) - \nabla h(x_{t+1}), x_t - x_{t+1}\rangle\\
    &= \eta_t g_t^\top(x_t - x_{t+1}),
\end{align*}
so that 
\begin{equation}\label{eq:breg_second_term}
    \eta_t g_t^\top(x_t - x_{t+1}) - D_h(x_{t+1},x_t) = D_h(x_{t},x_{t+1}).
\end{equation}
Combining Equations~\eqref{eq:main_bregman}, \eqref{eq:breg_first_term} and~\eqref{eq:breg_second_term}, we obtain: 
\begin{equation}
    D_h(x^\star, x_{t+1}) = D_h(x^\star, x_t) - \eta_t D_f(x^\star, x_t)- \eta_t D_f(x_t, x^\star) + D_h(x_t, x_{t+1}),
\end{equation}
which finishes the proof.
\end{proof}

\begin{replemma}{lemma:bregman_cocoercivity_main}
  If a convex function $f$ is relatively $L$-smooth w.r.t to $h$, then for any $\eta \leq \frac{1}{L}$, 
  \[ D_f(x,y) \geq \frac{1}{\eta} D_{h^*} \! \left(\nabla h(x) - \eta \left( \nabla f(x) - \! \nabla f(y) \right) ,\nabla h(x) \right) \]
\end{replemma}

\begin{proof}
    Let $y \in \interior \dom h$ and consider the function $g_y$ defined by
    \[g_y(x) = D_f(x,y) = f(x) - f(y) -  \nabla f(y)^\top (x-y) \]
    for $x \in C$. $g_y$ is nonnegative, convex and relatively $L$-smooth with respect to $h$, since it has the same Hessian than $f$. Therefore, for $\eta \in (0,\frac{1}{L}]$ the relative smoothness inequality~\eqref{eq:rel_smooth_sc} implies that for every $u \in \interior \dom h$ we have $D_{g_y}(u,x) \leq \frac{1}{\eta} D_h(u,x)$, that is
    \begin{equation}
    g_y(u) \leq g_y(x) + \nabla g_y(x)^\top (u-x) + \frac{1}{\eta} D_h(u,x) := Q_y(u,x). 
    \end{equation}
    The right-hand side $Q_y(u,x)$ is a convex function of $u$ and is minimized for a point $u^+$ such that
    \begin{equation}
        \nabla h(u^+) - \nabla h(x) + \eta \nabla g_y(x) = 0, 
    \end{equation}
     and therefore
     \begin{align*}
         0 \leq g_y(u^+) &\leq Q_y(u^+,x)\\
         & = g_y(x) - \frac{1}{\eta}(\nabla h(u^+) - \nabla h(x))^\top (u^+ - x) + \frac{1}{\eta} D_h(u^+,x)\\
         &= g_y(x) -  \frac{1}{\eta} D_h(x,u^+)\\
         &= D_f(x,y) - \frac{1}{\eta} D_{h^*}\left(\nabla h(u^+),\nabla h(x)\right)\\
         &= D_f(x,y) - \frac{1}{\eta} D_{h^*}\left(\nabla h(x) - \eta \nabla g_y(x),\nabla h(x)\right)\\
     \end{align*}
     and the result follows from the fact that $\nabla g_y(x) = \nabla f(x) - \nabla f(y)$.
\end{proof}

\section{Missing proofs for Variance Reduced methods (Section~\ref{sec:variance_reduction})}\label{app:saga}
\subsection{Bregman variance decomposition}
First, we use the following Bregman counterpart of a standard variance identity \citep{pfau2013generalized}, which we prove for completeness.
\begin{lemma}[Bregman variance decomposition]
\label{lemma:breg_variance_standard}
    Let $X$ be a random variable on $\reals^d$. Then for any $u \in \reals^d$,
    \begin{equation}\label{eq:bregman_var_decomposition}
        \esp{ D_{h^*}(X, u)} = D_{h^*}(\esp{X},u) + \esp{D_{h^*}(X, \esp{X})}
    \end{equation}
\end{lemma}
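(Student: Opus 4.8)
The identity is purely algebraic: it follows from expanding each Bregman divergence via its definition~\eqref{eq:bregman_divergence} (applied to the convex function $h^*$) and exploiting linearity of expectation. Write $\bar{X} = \esp{X}$ for brevity. The plan is to expand the two terms on the right-hand side, collect them, and verify that the result coincides with the expansion of the left-hand side. No duality or convexity inequality is needed here; unlike Lemma~\ref{lemma:bregman_young}, this is an exact identity valid for \emph{any} random variable $X$ on $\R^d$ and any $u$, and its proof does not use Assumption~\ref{assumption:blanket} beyond ensuring $h^*$ is differentiable so that $D_{h^*}$ is well defined.

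\textbf{Key steps.} First I would expand the left-hand side directly:
\begin{equation*}
    \esp{D_{h^*}(X,u)} = \esp{h^*(X)} - h^*(u) - \nabla h^*(u)^\top (\bar{X} - u),
\end{equation*}
using $\esp{X} = \bar{X}$ on the linear term. Next I would expand the ``bias'' term on the right,
\begin{equation*}
    D_{h^*}(\bar{X}, u) = h^*(\bar{X}) - h^*(u) - \nabla h^*(u)^\top (\bar{X} - u),
\end{equation*}
and the ``variance'' term,
\begin{equation*}
    \esp{D_{h^*}(X, \bar{X})} = \esp{h^*(X)} - h^*(\bar{X}) - \nabla h^*(\bar{X})^\top \esp{X - \bar{X}}.
\end{equation*}
Summing the last two displays and simplifying then yields the claim.

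\textbf{The crucial cancellations.} Two cancellations drive the proof, and they are the only points requiring care. First, the linear correction in the variance term vanishes: since $\esp{X - \bar{X}} = \esp{X} - \bar{X} = 0$, the term $\nabla h^*(\bar{X})^\top \esp{X - \bar{X}}$ is identically zero (this is exactly the Bregman analogue of the fact that the cross term in the Euclidean bias–variance split integrates to zero). Second, adding $D_{h^*}(\bar{X}, u)$ and $\esp{D_{h^*}(X,\bar{X})}$ makes the $+h^*(\bar{X})$ and $-h^*(\bar{X})$ terms cancel, leaving precisely
\begin{equation*}
    \esp{h^*(X)} - h^*(u) - \nabla h^*(u)^\top (\bar{X} - u),
\end{equation*}
which matches the expanded left-hand side, completing the proof. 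I do not anticipate any genuine obstacle: the computation is elementary, and the sole thing to keep visible is that the $\bar{X}$-dependent gradient term disappears by definition of $\bar{X}$ as the mean.
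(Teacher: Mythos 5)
Your proof is correct and follows essentially the same route as the paper's: both expand $D_{h^*}(\esp{X},u) + \esp{D_{h^*}(X,\esp{X})}$ via the definition~\eqref{eq:bregman_divergence}, cancel the $h^*(\esp{X})$ terms, and use $\esp{X - \esp{X}} = 0$ to eliminate the gradient cross term before matching the expansion of $\esp{D_{h^*}(X,u)}$ by linearity of expectation. Your remark that this is an exact identity requiring only differentiability of $h^*$, not convexity or Assumption~\ref{assumption:blanket}, is accurate and consistent with the paper's treatment.
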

As a consequence, for any random variable $V$ on $\reals^d$ and point $y \in \reals^d$ we have
\begin{equation}\label{eq:bregman_var_inequality}
      \esp{D_{h^*}(y+V - \esp{V}, y - \esp{V})} \geq \esp{ D_{h^*}(y + V - \esp{V}, y) }.
\end{equation}
\begin{proof}
Denoting $\overline{x} := \esp{X}$, We have for $u \in \reals^d$
\begin{align*}
     D_{h^*}(\overline{x},u) + \esp{D_{h^*}(X, \overline{x})} &= h^*(\overline{x}) - h^*(u) - \nabla h^*(u)^\top (\overline{x}-u)+ \esp{h^*(X) - h^*(\overline{x}) - \nabla h^*(\overline{x})^\top(X - \overline{x})}\\
     &= -h^*(u)  - \nabla h^*(u)^\top (\overline{x} - u) + \esp{h^*(X)} \\
     &= \esp{h^*(X) - h^*(u) - \nabla h^*(u)^\top(X - u)}\\
     &= \esp{D_{h^*}(X,u)}
\end{align*}
    which proves \eqref{eq:bregman_var_decomposition}. Then, \eqref{eq:bregman_var_inequality} follows from applying it to the point $u = y - \esp{V}$ and the random variable $X = y + V - \esp{V}$, along with using the nonnegativity of the Bregman divergence $D_{h^*}(\esp{X},u)$.
\end{proof}

\subsection{Proof of Theorem \ref{thm:saga_G}: generic Bregman-SAGA convergence bound}

In this subsection, we give a more detailed proof of Theorem~\ref{thm:saga_G}, and include derivations that had to be skipped in the main text because of space limitations. 

\begin{proof}[More detailed proof of Theorem~\ref{thm:saga_G}]
Similarly to BSGD, we start by applying Lemma~\ref{lemma:descent} (Appendix~\ref{app:sgd}), which yields
\begin{align}
\label{eq:main_with_error_terms_vr_app}
    \espk{i_t}{D_h(x^\star, x_{t+1})} =  D_h(x^\star, x_t) - \eta_t D_f(x^\star, x_t)   - \eta_t D_f(x_t, x^\star) + \espk{i_t}{D_h(x_t, x_{t+1})}.
\end{align}
Lemmas~\ref{lemma:duality} and~\ref{lemma:bregman_young} yield $D_h(x_t, x_{t+1})\leq (D_1 + D_2)/2$, with 
\begin{align*}
    &D_1 = D_{h^*}(\nabla h(x_t) - 2\eta_t \left[\nabla f_i(x_t) - \nabla f_i(x^\star)\right], \nabla h(x_t)),\\
    &D_2 = D_{h^*}(\nabla h(x_t) - 2\eta_t( \nabla f_i(x^\star) - \bar{\alpha}_i^t), \nabla h(x_t)).
\end{align*}
Using the gain function with the fact that $\eta_t \leq 1 / L_{f/h}$ and Lemma \ref{lemma:bregman_cocoercivity_main}, we have
\begin{equation}
\begin{split}
    \espk{i_t}{D_1} &= \espk{i}{D_{h^*}\left( \nabla h(x_t) - 2 \eta_t \left( \nabla f_i(x_t) - \nabla f_i(x^\star) \right) , \nabla h(x_t) \right)} \\
    & \leq 4 L_{f/h}^2\eta_t^2 \espk{i}{G\left(x_t,x_t, \frac{1}{L_{f/h}}(\nabla f_i(x_t) - \nabla f_i(x^\star))\right) D_{h^*} \left[ \nabla h(x_t) - \frac{1}{L_{f/h}} \left( \nabla f_i(x_t) - \nabla f_i(x^\star) \right), \nabla h(x_t) \right]}\\
    & \leq 4 L_{f/h} \eta_t^2 \espk{i}{G\left(x_t,x_t, \frac{1}{L_{f/h}}(\nabla f_i(x_t) - \nabla f_i(x^\star))\right) D_f(x_t,x^\star)} \\
    &\leq 4 L_{f/h} \eta_t^2 G_t D_f(x_t,x^\star).
\end{split}
\end{equation}
Note that we can pull the $G_t$ term out of the expectation over the choice of $i$ since $G_t$ holds for all $i$. For bounding $D_2$, Lemma \ref{lemma:breg_variance_standard} with $V = -2 \eta_t( \nabla f_i(x^\star) - \nabla f_i(\phi_i^t))$ leads to
\begin{equation}
\begin{split}
    \espk{i_t}{D_2} &= \espk{i}{D_{h^*}\left( \nabla h(x_t) - 2 \eta_t \left( \nabla f_i(x^\star) - \nabla f_i(\phi_i^t) + \frac{1}{n}\sum_{j=1}^n \nabla f_j(\phi_j^t) \right) , \nabla h(x_t) \right) } \\
    &\leq \espk{i}{ D_{h^*}\left( \nabla h(x_t) - 2 \eta_t \left( \nabla f_i(x^\star) - \nabla f_i(\phi_i^t) + \frac{1}{n}\sum_{j=1}^n \nabla f_j(\phi_j^t) \right) , \nabla h(x_t) - \frac{2 \eta_t}{n} \sum_{j=1}^n \nabla f_j(\phi_j^t) \right) }\\
    &\leq 4 \eta_t^2 L_{f/h}^2 \mathbb{E}_i\Bigg[ G\left( \nabla h(x_t) - \frac{1}{n} \sum_{j=1}^n \nabla f_j(\phi_j^t), \nabla h(\phi_j^t), \frac{1}{L_{f/h}}\left(\nabla f_i(\phi_j^t) - \nabla f_i(x^\star) \right) \right)\\
    &\qquad  D_{h^*}\left[ \nabla h(\phi_i^t) - \frac{1}{L_{f/h}}\left( \nabla f_i(\phi_i^t) - \nabla f_i(x^\star) \right), \nabla h(\phi_i^t) \right] \Bigg]\\
    &\leq 4 L_{f/h} \eta_t^2 G_t \espk{i}{D_{f_i}(\phi_i^t,x^\star)}.
\end{split}
\end{equation}
Recall that $H_t = \frac{1}{n}\sum_{j=1}^n D_{f_j}(\phi_j^t, x^\star)$. Plugging the expressions for $D_1$ and $D_2$ into Equation~\eqref{eq:main_with_error_terms_vr_app}, we obtain: 
\begin{align}
\label{eq:main_with_error_terms_vr_app_control}
    \espk{i_t}{D_h(x^\star, x_{t+1})} -  D_h(x^\star, x_t) \leq - \eta_t D_f(x^\star, x_t)   - \eta_t D_f(x_t, x^\star) + 2L_{f/h} \eta_t^2 G_t \left[D_f(x_t,x^\star) + H_t\right].
\end{align}
Following~\citet{hofmann2015variance}, we write:
\begin{equation} \label{eq:H_t_update_app}
    \espk{i_t}{H_{t+1}} = \left(1 - \frac{1}{n}\right)H_t + \frac{1}{n}D_f(x_t, x^\star),
\end{equation}
Indeed, $\phi_j^{t+1} = \phi_j^t$ with probability $1 - 1/n$, and $\phi_i^{t+1} = x_t$ with probability $1/n$. Therefore, we can use the $- H_t / n$ term to control the excess term from bounding $D_h(x_t, x_{t+1})$. 
In the end, using that $G_t$ is decreasing and so $\eta_t$ is increasing, we obtain the following recursion:
\begin{align} \label{eq:main_telescopic}
   \espk{i_t}{\psi_{t+1}} - \psi_t &= \frac{1}{\eta_{t+1}} D_h(x^\star, x_{t+1}) + \frac{n}{2}H_{t+1} - \frac{1}{\eta_t} D_h(x^\star, x_t) - \frac{n}{2}H_t  \nonumber \\ 
    &\leq \frac{1}{\eta_{t}}\left( D_h(x^\star, x_{t+1}) - D_h(x^\star, x_t)\right) + \frac{n}{2}\left(H_{t+1} - H_t\right) \nonumber\\ 
    &\leq  - D_f(x^\star, x_t) - \frac{1}{2}\left(1 - 4\eta_t L_{f/h} G_t \right) H_t  - \left(1 - 2\eta_t L_{f/h} G_t - \frac{1}{2}\right)D_f(x_t, x^\star).
\end{align}
If we choose $\eta_t \leq 1/(8 L_{f/h} G_t)$ then the last term is positive and $1 - 4\eta_t L_{f/h} G_t \geq 1/2$, so that using the relative strong convexity of $f$ leads to:
\begin{align*}
    \espk{i_t}{\psi_{t+1}} &\leq (\eta_t^{-1} - \mu_{f/h}) D_h(x^\star, x_t) + \left(1 - \frac{1}{2n} \right) \frac{n}{2} H_t\\
    &\leq \left(1 - \min\left(\eta_t \mu_{f/h}, \frac{1}{2n}\right)\right) \psi_t.
\end{align*}
The result can then be obtained by chaining this inequality. 
If $\mu_{f/h} = 0$ then we start back from Equation~\eqref{eq:main_telescopic}, use that $D_f(x^\star, x_t) \geq 0$ and the same fact that $1 - 4\eta_t L_{f/h} G_t \geq 1/2$ to obtain: 
\begin{equation*}
    \frac{1}{4}\left[D_f(x_t, x^\star) + H_t\right] \leq \psi_t - \espk{i_t}{\psi_{t+1}}.
\end{equation*}
The result is obtained by averaging over $T$, since the right hand side yields a telescopic sum, leading to the $1/T$ rate of Equation~\eqref{eq:main_saga_cvx}.
\end{proof}

\subsection{Lipschitz-Hessian setting}

In this section, we add the additional assumption that $h$ is $L_h$-smooth, and that the Hessian $\nabla^2 h^*$ is $M$-smooth in the operator norm, that is
\[ \|\left(\nabla^2 h^*(x) - \nabla^2 h^*(y)\right) u \| \leq M \|x-y\| \|u\| \]
for every $x,y,u \in \reals^d$.

\begin{repproposition}{prop:gain_hess_lip}
    If $h$ is $L_h$-smooth and the Hessian $\nabla^2 h^*$ is $M$-smooth, then the gain function can be chosen as: 
    \[ G(x,y,v) = 1 + 2 M L_h \left(\|y-x\| + \|v\|\right) .\]
\end{repproposition}

\begin{proof}[Proof of Proposition \ref{prop:gain_hess_lip}]
    Writing the divergence in integral form, we have for $x,y,v \in \reals^d$ and $\lambda \in [-1,1] $
    \begin{align*}
        D_{h^*}(x + \lambda v, x) &= \lambda^2 \int_0^1 \int_0^t v^\top \nabla^2 h^*(x + s \lambda v) v\, ds\, dt\\
        &\leq \lambda^2 \int_0^1 \int_0^t \left(v^\top \nabla^2 h^*(y + s v) v\, + M \|y + sv - x - \lambda s v\| \|v\|^2 \right) ds\, dt\\
        &\leq \lambda^2 \int_0^1 \int_0^t \left(v^\top \nabla^2 h^*(y + s v) v\, + M \left( \|y-x\| + 2s\|v\| \right) \|v\|^2 \right) ds\, dt\\
        &= \lambda^2 \left(D_{h^*}(y+v, y) + M (\|y-x\| + \|v\|) \|v\|^2 \right).
    \end{align*}
    Using the fact that is $h$ is $L_h$-smooth, $h^*$ is $1/L_h$-strongly convex and hence $\|v\|^2 \leq 2 L_h D_{h^*}(y+v,y)$, leading to
    \begin{align*}
        D_{h^*}(x + \lambda v, x) &\leq  \lambda^2 \left[ 1 + 2 M L_h \left(\|y-x\| + \|v\|\right)\right] D_{h^*}(y+v,y).
    \end{align*}
\end{proof}

\begin{repcorollary}{corr:G_t}
    Assume that $h$ is $L_h$-smooth and the Hessian $\nabla^2 h^*$ is $M$-smooth. Then, there exists an explicit constant $C$ such that if Algorithm \ref{algo:smd_vr} is run with a step size $\eta_t = 1/(8L_{f/h}G_t)$ with $G_t$ decreasing in $t$ and satisfying
    \begin{equation}
            G_t \geq \min \Bigg(\frac{L_{f/h}L_h}{\mu_h}, 1  + C \Big( \sum_{j=1}^n \|x_t - \phi_j^t\| + \|\sum_{j=1}^n \nabla f_j(\phi_j^t)\|\Big)\Bigg),
    \end{equation}
    then we have the convergence rate
    \begin{equation}\label{eq:cor3_conv_rate}
    \espk{i_t}{\psi_{t+1}}\le \left(1-\min\left(\frac{1}{8 G_t \kappa_{f/h}},\frac{1}{2n}\right)\right) \psi_t,
    \end{equation}
    where $\lim_{t \rightarrow \infty }G_t = 1$, or, more precisely,
    \begin{equation}
        \esp{G_t} \leq 1 + \mathcal O \left(1 - \min\left(\frac{1 }{8 \kappa_h \kappa_{f/h}},\frac{1}{2n}\right)\right)^t.
    \end{equation}
\end{repcorollary}

\begin{proof}[Proof of Corollary \ref{corr:G_t}]
Using the gain function from Proposition \ref{prop:gain_hess_lip}, to satisfy the assumptions of Theorem \ref{thm:saga_G} it is sufficient to choose $G_t$ such that
\begin{equation}\label{eq:gt_cond_cor3}
\begin{split}
    G_t & \geq 1 + 2M L_h \Big(\frac{1}{L_{f/h}} \|\nabla f_{i_t}(x_t) - \nabla f_{i_t}(x^\star)\| + \frac{1}{L_{f/h}} \| \nabla f_{i_t}(\phi_{i_t}^t) - \nabla f_{i_t}(x^\star) \| \\
    &\qquad + \| \nabla h(x_t) - \nabla h(\phi_{{i_t}}^t) - \frac{1}{4 n L_{f/h}}  \sum_{j=1}^n \nabla f_j(\phi_j^t)\| \Big).
\end{split}
\end{equation}
As the quantities involving $\nabla f_{i_t}(x^\star)$ are unknown, we provide an uper estimate. We can proceed in the following way, using the fact that, due to relative regularity, $f_i$ is also smooth with constant $L_{h} L_{f/h}$, and $f$ is strongly convex with constant $\mu_{h}\mu_{f/h}$:
\begin{align*}
    \|\nabla f_{i_t}(x_t) - \nabla f_{i_t}(x^\star)\|^2 & \leq 2L_h L_{f/h} D_{f_{i_t}}(x_t,x^\star)\\
    & \leq 2L_h L_{f/h}\, n \, D_f(x_t,x^\star) \\
    &\leq \frac{L_h L_{f/h}}{\mu_h \mu_{f/h}}\, n \, \| \frac{1}{n} \sum_{j=1}^n \nabla f_j(x_t) \|^2 \\
    & \leq \frac{\kappa_f \kappa_{f/h}}{ n} \left( \sum_{j=1}^n \| \nabla f_j(x_t) - \nabla f_j(\phi_j^t)\|  + \|\sum_{j=1}^n \nabla f_j(\phi_j^t)\| \right)^2\\
    & \leq \frac{\kappa_f \kappa_{f/h}}{ n} \left( \sum_{j=1}^n L_h L_{f/h}\|x_t - 
    \phi_j^t\|  + \|\sum_{j=1}^n \nabla f_j(\phi_j^t)\| \right)^2.
\end{align*}

And similarly, we can estimate the second term from
\begin{align*}
    \| \nabla f_{i_t}(\phi_{i_t}^t) - \nabla f_{i_t}(x^\star)\| &\leq \| \nabla f_{i_t}(x_t) - \nabla f_{i_t}(x^\star) \| + L_h L_{f/h} \|\phi_{i_t}^t - x_t\|,  \\
\end{align*}
which leads to the following upper estimate of the RHS of Condition \eqref{eq:gt_cond_cor3}:
\begin{align*}
     1 + &2ML_h\left( \frac{1}{L_{f/h}} \|\nabla f_{i_t}(x_t) - \nabla f_{i_t}(x^\star)\| + \frac{1}{L_{f/h}}\|\nabla f_i(\phi_{i_t}^t) -\nabla f_i(x^\star)\| \right.\\
     &+\left. \|\nabla h(x_t) - \nabla h(\phi_{i_t}^t) - \frac{1}{4n L_{f/h}} \sum_{j=1}^n \nabla f_j(\phi_j^t)\| \right) \\
     &\leq 1 + 2ML_h\Bigg( \frac{2}{L_{f/h}} \|\nabla f_{i_t}(x_t) - \nabla f_{i_t}(x^\star)\| + L_h \|\phi_{i_t}^t - x_t\| + \|\nabla h(x_t) - \nabla h(\phi_{i_t}^t) \| + \frac{1}{4n L_{f/h}} \| \sum_{j=1}^n \nabla f_j(\phi_j^t)\| \Bigg) \\
     &\leq 1 + 2M L_h \left( 2\sqrt{\frac{\kappa_h \kappa_{f/h}}{ n}} L_h\sum_{j=1}^n \|x_t - \phi_j^t\| +2L_h\|\phi_{i_t}^t - x_t\| + \left(\frac{1}{4n L_{f/h}} + \frac{2}{L_{f/h}}\sqrt{\frac{\kappa_h \kappa_{f/h}}{ n}} \right)\|\sum_{j=1}^n \nabla f_j(\phi_j^t)\|\right) \\
     &\leq 1 + C \left( \sum_{j=1}^n \|x_t - \phi_j^t\| + \|\sum_{j=1}^n \nabla f_j(\phi_j^t)\|\right)
\end{align*}
where $C$ is defined as
\[ C = 2 M L_h \max \left( 4 L_h \left( 1 + \sqrt{\frac{\kappa_h \kappa_{f/h}}{n}}\right), \frac{1}{L_{f/h}}\left( \frac{1}{4n} + 2 \sqrt{\frac{\kappa_h \kappa_{f/h}}{n}}\right) \right).\]

Now, with such choice of $G_t$, Theorem \ref{thm:saga_G} applies and the convergence rate \eqref{eq:cor3_conv_rate} holds. It remains to prove the estimate for the convergence rate of $G_t$ towards 1. To this end, we show that it is upper bounded by $\mathcal{O}(1+\psi_t^{1/2})$ since
\begin{equation}
    \begin{split}\label{eq:gt_leq_psi_t}
        1 + C \Bigg( \sum_{j=1}^n \|x_t - \phi_j^t\| &+ \|\sum_{j=1}^n \nabla f_j(\phi_j^t)\|\Bigg)\\
        &\leq  1 + C \left( \sum_{j=1}^n \|x_t - \phi_j^t\| + \|\sum_{j=1}^n \nabla f_j(x_t)\| + \sum_{j=1}^n \|\nabla f_j(\phi_j^t) -\nabla f_j(x_t)\|\right) \\ 
        &\leq  1 + C \left( \sum_{j=1}^n (1 + L_h {L_{f/h}}) \|x_t - \phi_j^t\| + n \|\nabla f(x_t)\|\right) \\ 
        &\leq  1 + C \left( \sum_{j=1}^n (1 + L_h {L_{f/h}})\left( \|x_t - x^*\| + \|x^* - \phi_j^t\|\right) + n L_h L_{f/h}\|x_t -x_*\|\right) \\ 
        &\leq  1 + C \left( n(1 + 2L_h L_{f/h}\|x_t-x^*\| + \sum_{j=1}^n (1 + L_h {L_{f/h}}) \|x^* - \phi_j^t\| \right) \\ 
        &\leq  1 + C \left( n(1 + 2L_h L_{f/h}) \sqrt{\frac{2}{\mu_h}D_h(x^*,x_t)} + \sum_{j=1}^n (1 + L_h {L_{f/h}}) \sqrt{\frac{2}{\mu_h \mu_{f/h}} D_{f_j}(\phi_j^t,x^*) } \right) \\ 
        & = 1 + \mathcal{O}\left( \sqrt{D_h(x^*,x_t)} + \sum_{j=1}^n \sqrt{D_{f_j}(\phi_j^t,x^*)} \right)\\
        & = 1 + \mathcal{O} \left(\sqrt{\psi_t}\right)
    \end{split}
\end{equation}
Since we imposed a safeguard such that $G_t \geq \frac{L_{f/h} L_h}{\mu_h}$, the convergence rate of $\psi_t$ is bounded by
\[ \esp{\psi_t} = \mathcal{O}\left( 1 - \min\left( \frac{1}{8\kappa_h \kappa_{f/h}}, \frac{1}{2n}\right) \right)^{t}\]
as stated by Corollary \ref{thm:saga_cst_step}. Indeed, the assumptions are verified as $h^*$ is $1/\mu_h$-smooth and $f^*$ is $1/L_f$-strongly convex with $L_f = L_h L_{f/h}$. This worst-case estimate for $\psi_t$, along with the majorization \eqref{eq:gt_leq_psi_t}, gives the resulting rate for $G_t$.
\end{proof}

\section{Bregman SVRG}
\label{app:svrg}

\begin{algorithm}[t]
\caption{Bregman-SVRG$( (\eta_t)_{t\ge 0}, x_0)$}
\label{algo:breg_svrg}
\begin{algorithmic}[1]
\STATE $\phi_0=x_0$, compute and store $\nabla f(\phi_0)$.
\vspace{0.5ex}
\FOR{$t=0,1,2,\ldots$}
\vspace{0.5ex}
\STATE Pick $i_t\in\{1,...,n\}$ uniformly at random
\vspace{0.5ex}
\STATE $g_t=\nabla f_{i_t}(x_t) - \nabla f_{i_t}(\phi_t) + \nabla f(\phi_t)$
\vspace{0.5ex}
\STATE $x_{t+1}=\arg \min_x \left\{\eta_t g_t^\top x + D_h(x, x_t)\right\}$
\vspace{0.5ex}
\STATE $   \phi_{t+1} =  \left\{
                \begin{array}{l}
 x_t, \text{ and compute and store } \nabla f(\phi_{t+1}) \text{ with probability } p\\
        \phi_t \text{ otherwise.}
        \end{array}
              \right.
$
\vspace{0.5ex}
\ENDFOR
\end{algorithmic}
\end{algorithm}

We consider in this section the convergence guarantees of Bregman SVRG (BSVRG), which is presented in Algorithm~\ref{algo:breg_svrg}. We consider the same variant as~\citet{hofmann2015variance}, in which the full gradient used for variance reduction is recomputed at each step with a small probability $p$, instead of after a fixed number of steps. We study this variant of BSVRG since it is very closely related to BSAGA. The main difference is that instead of updating $\phi_{i_t}$ when $i_t$ is picked, the algorithm chooses only one common $\phi_t$ to perform variance reduction, and this common $\phi_t$ is updated with probability $p$ at the end of each iteration. Thus, the convergence Theorem for Algorithm~\ref{algo:breg_svrg} closely follows Theorem~\ref{thm:saga_G}.

\begin{theorem}\label{thm:SVRG_G}
Assume that Algorithm \ref{algo:breg_svrg} is run with a step size sequence $\{\eta_t\}_{t\geq 0}$ satisfying $\eta_t = 1 / (8L_{f/h} G_t)$ for every $t\geq 0$, with $G_t$ decreasing in $t$ and such that for all $j \in \{1, \cdots, n\}$:
\begin{equation*}
\begin{split}
    G_t \geq& G\left(\nabla h(x_t), \nabla h(x_t),\frac{1}{L_{f/h}}(\nabla f_j(x_t) - \nabla f_j(x^\star))\right),\\
    G_t \geq& G\Big(\nabla h(x_t) - 2 \eta_t \nabla f(\phi_t), \nabla h(\phi_t), \frac{1}{L_{f/h}}(\nabla f_j(\phi_t) - \nabla f_j(x^\star))\Big).
\end{split}
\end{equation*}
Then, under Assumptions \ref{assumption:blanket} and \ref{assumption:regularity}, the potential $\psi_t = D_h(x^\star, x_t) + \frac{\eta_t}{2p} D_f(\phi_t, x^\star)$ satisfies
\begin{equation}
    \espk{i_t}{\psi_{t+1}}\le \left(1-\min\left(\eta_t \mu_{f/h},\frac{p}{2}\right)\right) \psi_t,
\end{equation}
In the convex case ($\mu_{f/h} = 0$), we obtain that
\begin{equation}
    \esp{\frac{1}{4T} \sum_{t=1}^T \eta_t \left[D_f(x_t, x^\star) + D_f(\phi_t, x^\star)\right]} \leq \frac{\psi_0}{T}.
\end{equation}
\end{theorem}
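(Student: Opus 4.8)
The plan is to transpose the proof of Theorem~\ref{thm:saga_G} almost verbatim, under the dictionary that replaces the SAGA quantities $\bar{\alpha}^t=\frac1n\sum_j\nabla f_j(\phi_j^t)$, $H_t=\frac1n\sum_j D_{f_j}(\phi_j^t,x^\star)$ and the refresh probability $1/n$ by their SVRG counterparts $\nabla f(\phi_t)$, $D_f(\phi_t,x^\star)$ and $p$. First I would invoke the descent identity of Lemma~\ref{lemma:descent}, which gives
\begin{equation*}
\espk{i_t}{D_h(x^\star,x_{t+1})} = D_h(x^\star,x_t) - \eta_t D_f(x^\star,x_t) - \eta_t D_f(x_t,x^\star) + \espk{i_t}{D_h(x_t,x_{t+1})}.
\end{equation*}
Writing $g_t=\tfrac12 g_1+\tfrac12 g_2$ with $g_1=2[\nabla f_{i_t}(x_t)-\nabla f_{i_t}(x^\star)]$ and $g_2=2[\nabla f_{i_t}(x^\star)-\nabla f_{i_t}(\phi_t)+\nabla f(\phi_t)]$, Lemmas~\ref{lemma:duality} and~\ref{lemma:bregman_young} yield $D_h(x_t,x_{t+1})\le (D_1+D_2)/2$ with $D_1,D_2$ exactly as in the SAGA proof but with $\bar{\alpha}_i^t$ replaced by $\nabla f_{i_t}(\phi_t)-\nabla f(\phi_t)$.

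The term $D_1$ is bounded identically, giving $\espk{i_t}{D_1}\le 4L_{f/h}\eta_t^2 G_t D_f(x_t,x^\star)$ via the first gain-function condition and Bregman co-coercivity (Lemma~\ref{lemma:bregman_cocoercivity_main}). For $D_2$ I would apply the Bregman bias--variance identity of Lemma~\ref{lemma:breg_variance_standard} with $V=-2\eta_t(\nabla f_{i_t}(x^\star)-\nabla f_{i_t}(\phi_t))$, whose conditional mean is $\espk{i_t}{V}=2\eta_t\nabla f(\phi_t)$ because $\phi_t$ is now a single common anchor. This recenters the divergence at $\nabla h(x_t)-2\eta_t\nabla f(\phi_t)$, after which the second gain-function condition of the statement together with co-coercivity gives $\espk{i_t}{D_2}\le 4L_{f/h}\eta_t^2 G_t\,\espk{i_t}{D_{f_{i_t}}(\phi_t,x^\star)} = 4L_{f/h}\eta_t^2 G_t D_f(\phi_t,x^\star)$, the last equality holding since $\phi_t$ does not depend on $i_t$. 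With $\eta_t\le 1/(8L_{f/h}G_t)$ the two bounds contribute at most $\tfrac{\eta_t}{4}[D_f(x_t,x^\star)+D_f(\phi_t,x^\star)]$ to the descent.

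The second ingredient is the evolution of the anchor: since $\phi_{t+1}=x_t$ with probability $p$ and $\phi_{t+1}=\phi_t$ otherwise, $\espk{i_t}{D_f(\phi_{t+1},x^\star)}=(1-p)D_f(\phi_t,x^\star)+p\,D_f(x_t,x^\star)$, the exact analog of~\eqref{eq:H_t_update} with $1/n\to p$. Assembling $\psi_{t+1}$ and using this update, the coefficient of $D_f(\phi_t,x^\star)$ becomes $\tfrac{\eta_t}{4}+\tfrac{\eta_t}{2p}(1-p)=(1-\tfrac p2)\tfrac{\eta_t}{2p}$ — exactly the contraction of the $\phi$-part of the potential — while the coefficient of $D_f(x_t,x^\star)$ is $-\tfrac{3\eta_t}{4}+\tfrac{\eta_t}{2}=-\tfrac{\eta_t}{4}\le 0$. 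Relative strong convexity turns $-\eta_t D_f(x^\star,x_t)$ into a factor $(1-\eta_t\mu_{f/h})$ on $D_h(x^\star,x_t)$, yielding $\espk{i_t}{\psi_{t+1}}\le(1-\min(\eta_t\mu_{f/h},p/2))\psi_t$. For the convex case I would instead drop $-\eta_t D_f(x^\star,x_t)\le 0$ and keep the two negative terms, so that $\tfrac{\eta_t}{4}[D_f(x_t,x^\star)+D_f(\phi_t,x^\star)]\le \psi_t-\espk{i_t}{\psi_{t+1}}$ telescopes to the claimed $1/T$ bound.

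The main obstacle is the coefficient bookkeeping in this last step: one must check that the excess $\tfrac{\eta_t}{4}D_f(\phi_t,x^\star)$ left over from bounding $D_h(x_t,x_{t+1})$ is precisely reabsorbed by the variance-reduction gain of the $\phi$-update, which is what pins down the weight $\tfrac{\eta_t}{2p}$ in $\psi_t$ and produces the clean $p/2$ rate. As in Theorem~\ref{thm:saga_G}, a secondary point is the treatment of a time-varying $\eta_t$ (through $G_t$): the monotonicity / slowly-varying condition on $G_t$ must be used so that replacing $\eta_{t+1}$ by $\eta_t$ in the $\phi$-term does not break the telescoping, exactly as in the SAGA argument; all other steps transfer unchanged.
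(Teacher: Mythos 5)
Your proposal is correct and follows exactly the paper's route: the paper's own proof of Theorem~\ref{thm:SVRG_G} simply states that the Theorem~\ref{thm:saga_G} derivation transfers verbatim under the substitutions $\phi_i^t \to \phi_t$ and $1/n \to p$, with the only change being the anchor-update identity $\espk{i_t}{D_f(\phi_{t+1},x^\star)} = (1-p)D_f(\phi_t,x^\star) + p\,D_f(x_t,x^\star)$ replacing Equation~\eqref{eq:H_t_update}. Your version merely makes this explicit, and your coefficient bookkeeping (the $\tfrac{\eta_t}{4}+\tfrac{\eta_t}{2p}(1-p)=(1-\tfrac{p}{2})\tfrac{\eta_t}{2p}$ and $-\tfrac{3\eta_t}{4}+\tfrac{\eta_t}{2}=-\tfrac{\eta_t}{4}$ computations, the recentering via Lemma~\ref{lemma:breg_variance_standard} with $\espk{i_t}{V}=2\eta_t\nabla f(\phi_t)$, and the monotonicity caveat on $G_t$) checks out against the detailed SAGA proof in Appendix~\ref{app:saga}.
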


\begin{proof}
As explained before Theorem~\ref{thm:SVRG_G}, the only thing that changes between BSAGA and BSVRG is that a global $\phi_t$ is used instead of separate $\phi_i^t$, and that it is update with probability $p$ at the end of each iteration (instead of updating $\phi_{i_t}^t$ at time $t$ for SAGA). Thus, all the derivations performed for BSAGA hold for BSVRG if we replace $\phi_i^t$ with $\phi_t$ for all $i$. The only equation that needs to be adapted is Equation~\eqref{eq:H_t_update_app}, since it relies on the way the $\phi_i^t$ are updated. Yet, in the case of BSVRG, it writes:
\begin{equation} \label{eq:H_t_update_svrg}
    \esp{H_{t+1}} = \left(1 - p\right)H_t + p D_f(x_t, x^\star),
\end{equation}
which is the same as for BSAGA but with $p$ instead of $1/n$. Therefore, the conclusions are unchanged if we replace $n$ by $1/p$ whenever it appears in the bounds. Similar convergence guarantees hold when $\phi_t$ is updated every fixed number of steps $T$, but the proof is substantially more involved since Equation~\eqref{eq:H_t_update_svrg} does not hold in such a simple form. 
\end{proof}

\section{Additional details for the experiments}
\label{app:experiments}
Due to space limitations, some details of the experimental setting are missing from the main text, and we thus present them in this section. Note that all the experiments presented in this paper run in less than an hour on a standard laptop (and usually much less).  Our code is also available in supplementary material.   

\subsection{Poisson inverse problems}
We consider the minimization problem
\begin{equation}\label{eq:poisson_pb_app}
   \min_{x \in \reals^{d}_{+}} f(x) = \frac{1}{n}D_{\rm KL}(b, Ax) 
\end{equation}
 where $D_{\rm KL}(u,v) = \sum_{i=1}^n u_i \log(u_i / v_i) - u_i + v_i$ is the Kullback-Leibler divergence, and $A \in \reals^{n \times d}$ is a typically sparse matrix that models the measurement process. Problem \eqref{eq:poisson_pb_app} models the maximum likelihood estimation problem when assuming the statistical model
 \[ b \sim {\rm Poisson}(Ax^*)\]
 where $x^*$ is the true unknown signal. Inverse problems with Poisson noise arise in various signal processing applications such as astronomy or computerized tomography, see \citet{Review2009} and references therein. 
 
 As a motivating application of relative smoothness, \citet{Bauschke2017} prove that the Poisson objective $f$ is relatively smooth with respect to the log-barrier reference function
\[ h(x) = - \sum_{i=1}^d \log x_i\]
with constant $\sum_{j=1}^n b_j / n$. This constant can be quite conservative when $A$ is a sparse matrix, and so we prove a better estimate by leveraging this structure. For $j \in \{1\dots n\}$, we denote $S_j$ the support of the $j$-th column of $A$, that is
\[ S_j := \{ i \in \{1\dots n\} \, : A_{ij} \neq 0\}. \]

\begin{proposition}\label{prop:L_poisson}
    The Poisson objective function defined in \eqref{eq:poisson_pb_app} is relatively $L$-smooth w.r.t the log-barrier for 
    \begin{equation}
        L \geq \frac{1}{n} \, \max_{j \in \{1\dots d\}} \sum_{i \in S_j} b_i.
    \end{equation} 
\end{proposition}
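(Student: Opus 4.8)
The plan is to verify the second-order characterization of relative smoothness, namely $\nabla^2 f(x) \preceq L\,\nabla^2 h(x)$ for every $x$ in the interior of $\reals^d_+$, which by the Hessian equivalence noted just after \eqref{eq:rel_smooth_sc} implies the divergence form of the claim. First I would compute the two Hessians explicitly. Writing $a_i$ for the $i$-th row of $A$, differentiating $f(x) = \frac{1}{n}\sum_{i=1}^n [b_i \log b_i - b_i \log (Ax)_i - b_i + (Ax)_i]$ twice gives
\[ \nabla^2 f(x) = \frac{1}{n}\sum_{i=1}^n \frac{b_i}{(Ax)_i^2}\, a_i a_i^\top, \]
while the log-barrier yields the diagonal Hessian $\nabla^2 h(x) = \mathrm{diag}(x_1^{-2},\dots,x_d^{-2})$. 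The target inequality then reduces to showing, for every $u \in \reals^d$,
\[ \frac{1}{n}\sum_{i=1}^n \frac{b_i}{(Ax)_i^2}(a_i^\top u)^2 \leq L\sum_{j=1}^d \frac{u_j^2}{x_j^2}. \]

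The key idea would be to unveil a hidden probability distribution. Setting $w_j = u_j/x_j$ and $p_{ij} = A_{ij}x_j/(Ax)_i$, the nonnegativity of $A$ and positivity of $x$ guarantee $p_{ij}\geq 0$ and $\sum_j p_{ij}=1$, so that $p_{i\cdot}$ is a probability vector and $(a_i^\top u)/(Ax)_i = \sum_j p_{ij} w_j$. Applying Jensen's inequality to the convex map $t \mapsto t^2$ gives $\big(\sum_j p_{ij}w_j\big)^2 \leq \sum_j p_{ij} w_j^2$, and after exchanging the order of summation the left-hand side is bounded by $\frac{1}{n}\sum_{j} w_j^2 \big(\sum_i b_i p_{ij}\big)$.

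The sparsity refinement, which is the crux of the argument, lies in estimating $\sum_i b_i p_{ij}$. Since $p_{ij}=0$ whenever $A_{ij}=0$, only indices $i \in S_j$ contribute, and because each $p_{ij}\leq 1$ (it is a single term of a convex combination summing to one) we obtain $\sum_i b_i p_{ij} = \sum_{i\in S_j} b_i p_{ij} \leq \sum_{i\in S_j} b_i$. Taking the maximum over $j$ and recalling $\sum_j w_j^2 = \sum_j u_j^2/x_j^2$ then produces the claimed constant $L = \frac{1}{n}\max_{j} \sum_{i \in S_j} b_i$. The main point to watch is well-definedness rather than any deep difficulty: one should assume $A$ has no zero rows, equivalently $(Ax)_i>0$ on the domain, which is already implicit in the KL objective being finite. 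The bound $p_{ij}\leq 1$ is exactly what replaces the cruder estimate $\sum_i b_i$ from \citet{Bauschke2017} and captures the column-support structure of $A$.
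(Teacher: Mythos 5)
Your proof is correct and follows essentially the same route as the paper's: both compute the Hessians, apply Jensen's inequality to $t \mapsto t^2$ with the convex weights $A_{ij}x_j/(A_i^\top x)$, swap the summation order, and exploit the fact that these weights lie in $[0,1]$ and vanish outside the column support $S_j$ to obtain the refined constant. Your added remark on well-definedness (requiring $(Ax)_i > 0$, i.e.\ no zero rows of $A$ on the positive orthant) is a sound, if minor, clarification that the paper leaves implicit.
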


\begin{proof} Let us denote $A_1,\dots A_n$ the row vectors of $A$. We refine the analysis from \citet[Lemma 7]{Bauschke2017} and start by writing for $x \in \reals^d_{++}, u \in \reals^d$
    \begin{equation*}
            u^\top \nabla^2 f(x) u = \frac{1}{n} \sum_{i = 1}^n b_i \frac{(A_i^\top u)^2}{(A_i^\top x)^2}.
    \end{equation*}
    Applying the Jensen inequality to the function $t \mapsto t^2$ and weights $w_{ij} = A_{ij} x_j / (A_i^\top x)$ yields
    \begin{equation*}
        \begin{split}
            d^\top \nabla^2 f(x) d & = \frac{1}{n} \sum_{i = 1}^n b_i \left( \sum_{j=1}^d w_{ij} \frac{u_j}{x_j} \right)^2 \\
            &\leq \frac{1}{n} \sum_{i=1}^n \sum_{j=1}^d b_i w_{ij} \cdot \frac{u_j^2}{x_j^2} \\
            & \leq \frac{1}{n} \sum_{j=1}^d \sum_{i \in S_j}^n b_i \frac{u_j^2}{x_j^2}\\
            & \leq L \sum_{j=1}^d \frac{u_j^2}{x_j^2}\\
            &= L \, u^\top \nabla^2 h(x) u
        \end{split}
    \end{equation*}
    where we used the fact that $w_{ij} \in [0,1]$ if $i \in S_j$, and $w_{ij} = 0$ otherwise.
\end{proof}
The relative Lipschitz constant provided by Proposition \ref{prop:L_poisson} can be considerably smaller than $\sum_{j=1}^n b_j /n$ when $A$ is sparse, which is the case in practical applications.

\begin{figure}
\centering
\subfigure[Original image]{
   \includegraphics[width=0.3\linewidth]{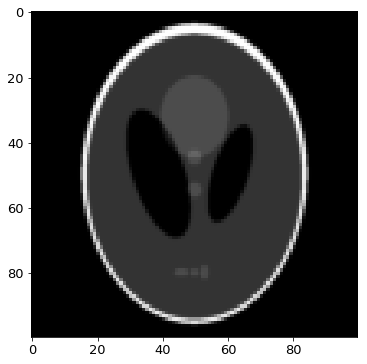}
    \label{fig:shepp-logan}
}
\subfigure[Sinogram]{
    \includegraphics[width=0.3\linewidth]{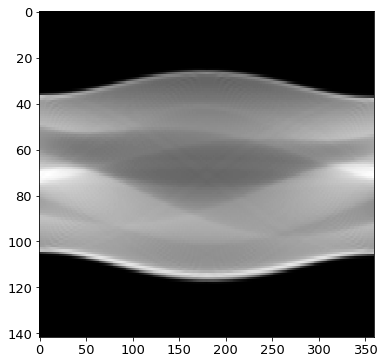}
    \label{fig:sinogram}
}
\caption{Illustration of the Radon transform on the Shepp-Logan phantom. On the sinogram, each column corresponds to the line integral of the image under different projection angles.\label{fig:tomograpy}
}
\end{figure} 

For our numerical experiments, we compare full-batch Bregman gradient descent (BGD), Bregman stochastic gradient descent (BSGD), and the Bregman SAGA scheme described in Algorithm \ref{algo:smd_vr}. We also implement the Multiplicative Update (MU), also known as Lucy-Richardson or Expectation-Maximization~\citep{MU-poisson}, which is the standard baseline for Poisson inverse problems.
 
 \paragraph{Synthetic problem in the interpolation setting.} In Figure~\ref{fig:poisson_inverse}, we simulate a synthetic problem the gradients at optimum are zero, by choosing $b = Ax^\star$ for some random $A \in \R^{n\times d}$ and $x^* \in \R^d$ (indices sampled uniformly between $0$ and $1$), with $n = 10000$ and $d=1000$.
 
 \paragraph{Tomographic reconstruction problem.} Computerized tomography~\citep{kak2002principles} is the task of reconstructing an object from cross-sectional projections, with fundamental applications to medical imaging. We study a classical synthetic toy problem for this task: the Shepp-Logan phantom (Figure \ref{fig:shepp-logan}). In this setting, the observation matrix $A$ corresponds to the discrete \textit{Radon transform}, which is the cross-sectional projection of the original image $x$ along different projection angles $\theta_1,\dots,\theta_n$ (Figure \ref{fig:sinogram}). That is, the objective writes
 
 \begin{equation}\label{eq:tomography_objective}
 f(x) = \frac{1}{n}D_{\rm KL}(b, Ax) = \frac{1}{n} \sum_{i=1}^n D_{\rm KL}(b_{\theta_i}, A_{\theta_i}x) \end{equation}
where $b_{\theta_i},A_{\theta_i}$ correspond to the observation and projection matrix along the angle $\theta_i$. For stochastic algorithms, the formulation \eqref{eq:tomography_objective} naturally yields a finite-sum structure: we thus take $f_i(x) = D_{\rm KL}(b_{\theta_i}, A_{\theta_i} x)$ for $i = 1\dots n$. 

We corrupt the sinogram with Poisson inverse noise, and apply our algorithms. We use $n = 360$ projection angles, and the image dimension is $d = 100^2$. As the matrix $A$ has a sparse structure, we use the relative smoothness constant provided by Proposition \ref{prop:L_poisson} for a better estimate. The step-size given by theory was rather conservative in this case, so we increased it by a factor of $5$ for all Bregman algorithms (and even 10 for BGD).

\subsection{Statistically Preconditioned Distributed Optimization}

We detail in this section the setting that was used to obtain Figure~\ref{fig:distrib_optim}. In particular, we use the following logistic regression objective with quadratic regularization, meaning that the function at node~$i$ is:
$$f_i: x \mapsto \frac{1}{N}\sum_{j=1}^N \log\left(1 + \exp(-y_{i,j} x^\top a_j^{(i)})\right) + \frac{\lambda}{2}\|x\|^2,$$
where $y_{i,j} \in \{-1, 1\}$ is the label associated with $a_j^{(i)}$, the $j$-th sample of node $i$. We use a regularization parameter of $\lambda = 10^{-5}$, and the size of the local datasets is equal to $N=1000$. The local dataset is constructed by shuffling the RCV1 dataset, downloaded from LibSVM, and then assigning a fixed portion to each worker. Then, one node (without loss of generality, node 0) uses its local dataset to construct the preconditioning dataset, so that:
\begin{equation}
    h: x \mapsto f_0(x) + \frac{c_{\rm prec}}{2}\|x\|^2,
\end{equation}
where $c_{\rm prec} = 10^{-5}$. Tuning $c_{\rm prec}$ in order to obtain the fastest algorithms is hard in general, as detailed in~\citet{hendrikx2020statistically} (in which it is denoted as $\mu$). One strategy is to choose $c_{\rm prec}$ of order $1 / n_{\rm prec}$ (in our case $n_{\rm prec} = N = 1000$), and then decrease it as long as BGD is stable. Our chosen value ($10^{-5}$) is smaller than that of~\citet{hendrikx2020statistically} for this problem ($10^{-4}$), in which they used a rougher $c_{\rm prec} = c / n_{\rm prec}$ criterion with varying $n_{\rm prec}$, and a larger step-size $\eta = 1$ for BGD (which is the same as DANE). Besides, we see that SPAG is slightly unstable in our example, and increasing $c_{\rm prec}$ would help with that. In this case, theory gives that $L_{f/h} \approx 1$. Yet, when $c_{\rm prec} \approx \lambda$, this step-size usually has to be chosen a bit smaller. Therefore, we choose in our case $\eta = 0.5$ for BGD and SPAG, and $\eta = 0.05$ for BSGD and BGD. Note that there is always a constant factor between the maximum step-size for SAGA and that of BGD, and the difference could further be explained by the difference between the batch condition number (relative smoothness of $f$) versus the stochastic one (max relative smoothness of the $f_i$).  

We compute the minimum error as the smallest error over all iterations for all algorithms. Then, we subtract it to the running error of an algorithm to get the suboptimality at each step. Following~\citet{hendrikx2020statistically}, local problems are solved using a sparse implementation of SDCA~\citep{shalev2016sdca}. We warm-start the local problems (initializing on the solution of the previous one), and perform 10 passes over the preconditioning dataset at each step, or until the norm of the gradient of the inner problem is small enough ($10^{-6}$). The number of inner passes could be reduced further, but then the algorithms started to converge slightly more slowly. This results in an overall computational overhead for the server, since BSAGA and BSGD require to solve many more inner problems, which are not so cheap to compute. Yet, this overhead only affects the server, and the iteration complexity is much lower, meaning that BSAGA is indeed very efficient to reduce the communication complexity of solving distributed empirical risk minimization problems.

\end{document}